\g@addto@macro\normalsize{%
\setlength\abovedisplayskip{4pt}
\setlength\belowdisplayskip{4pt}
\setlength\abovedisplayshortskip{4pt}
\setlength\belowdisplayshortskip{4pt}
}
\newtheorem{theorem}{Theorem}[section]
\newtheorem{lemma}[theorem]{Lemma}
\newtheorem{proposition}[theorem]{Proposition}
\newtheorem*{thma}{Theorem A}
\theoremstyle{definition}
\newtheorem{remark}[theorem]{Remark}
\newcommand{\rone}{\mathbb{R}}
\newcommand{\rtwo}{{\mathbb R^2}}
\newcommand{\cpx}{\mathbb C}
\newcommand{\nat}{\mathbb N}
\newcommand{\calX}{{\mathcal X}}
\newcommand{\tila}{{\widetilde a}}
\newcommand{\tilu}{{\widetilde u}}
\newcommand{\re}{{\mathrm{Re }}}
\newcommand{\al}{{\alpha}}
\newcommand{\la}{{\lambda}}
\newcommand{\ve}{{\varepsilon}}
\newcommand{\vp}{{\varphi}}
\newcommand{\om}{{\omega}}
\newcommand{\Om}{{\Omega}}
\newcommand{\pa}{\partial}
\newcommand{\qand}{{\quad \mbox{and} \quad}}
\newcommand{\qas}{{\quad \mbox{as} \quad}}
\newcommand{\qfor}{{\quad \mbox{for} \quad}}
\newcommand{\qin}{{\quad \mbox{in} \quad}}
\newcommand{\qon}{{\quad \mbox{on} \quad}}
\begin{document}

\title{\textbf{Quantization effects for multi-component Ginzburg-Landau vortices}}

\author{Rejeb Hadiji  \thanks{Univ Paris Est Creteil, CNRS, LAMA, F-94010 Creteil, France. 
Univ Gustave Eiffel, LAMA, F-77447 Marne-la-Vall'ee, France.
 email: hadiji@u-pec.fr}, $\,$
 Jongmin Han\thanks{Department of Mathematics, Kyung Hee University,
 Seoul, 02447, Korea.
 email:  jmhan@khu.ac.kr} $\,$ and $\,$
Juhee Sohn\thanks{School of Liberal Arts and Sciences, Korea National University of Transportation,
Chungju, 27469, Korea. email: woju48@ut.ac.kr}
}
 \date{}
\maketitle

\begin{abstract}
In this paper, we are concerned with $n$-component Ginzburg-Landau equations on $\rtwo$.
By introducing a diffusion constant  for each component, we discuss that the $n$-component equations are different from $n$-copies of the single Ginzburg-Landau equations.
Then, the results of Brezis-Merle-Riviere for the single Ginzburg-Landau equation can be nontrivially extended to the multi-component case.
First, we show that if the solutions have their gradients in $L^2$ space, they are trivial solutions.
Second, we prove that if the potential is square summable, then it has quantized integrals, i.e., there exists one-to-one correspondence between the possible values of the potential energy and $\nat^n$.
Third, we show that different diffusion coefficients in the system are important to obtain nontrivial solutions of $n$-component equations.
\end{abstract}

\small

\noindent{MSC2000 : 35B40, 35J60, 35Q60.}

\noindent{Keywords : $n$-component Ginzburg-Landau equations, quantization effect}

\normalsize

\section{Introduction}
\setcounter{equation}{0}

Let $\Om \subset \rtwo$ be a smooth bounded simply connected domain.
The classical Ginzburg-Landau(abbreviated by GL) energy on $\Om$  is given by
\begin{equation}
\label{eq:ftnal E BBH}
 G_\ve^b (u)  = \frac12 \int_\Om  |\nabla u|^2  dx + \frac{1}{4\ve^2} \int_\Om \big(1- |u|^2 \big)^2 dx.
\end{equation}
Here, $u :\Om \to \rtwo$ is an order parameter and $u=g$ on $\pa \Om$ for a smooth function $g:\Om \to S^1$.
For convenience, we often regard  $u:\Om \to \rtwo$   as   $u:\Om \to \cpx$.
The corresponding Euler-Lagrange equations are
\begin{equation}
\label{eq:GL-BBH}
\left\{
\begin{aligned}
-\Delta u & = \frac{1}{\ve^2} u  (1-|u |^2  ) \qin  \Om,\\
u&=g \qon \pa \Om.
\end{aligned} \right.
\end{equation}
For the last three decades, there have been lots of studies on the solution structures of \eqref{eq:GL-BBH}, in particular, on the asymptotics as $\ve \to 0$.

Such studies started from the seminal work of \cite{BBH93} and \cite{BBH94} which deals fairly in detail with the asymptotic behaviors of   minimizers $u_\ve$ for $G_\ve^b$ on
\[ H_g^1(\Om, \rtwo)= \{ u \in H^1(\Om, \rtwo): u=g \text{ on } \pa\Om\}.
\]
The main issue in the analysis is the value of the degree of $g$, say $d$.
If $d=0$, then $H_g^1(\Om, S^1)\neq \emptyset$ and $u_\ve$ converges to a  harmonic map $u_0$ in $H_g^1(\Om, \rtwo)$.
Here, $u_0$  minimizes
\[ \int_\Om |\nabla u|^2 dx \qon H_g^1(\Om,S^1)
\]
and is the unique solution of
\begin{equation}
\label{eq:u0 GL}
\begin{aligned}
-\Delta u_0  = u_0 |\nabla u_0|^2   \text{ on }  \Om, \quad
u_0  =g    \text{ on }  \pa\Omega, \quad
|u_0| =1  \text{ on } \Om.
\end{aligned}
\end{equation}
The main point is that $u_0$ serves as a test function for $G_\ve^b$.
This gives a uniform  boundedness of $G_\ve^b(u_\ve)$ and hence $H^1$-boundedness of $u_\ve$.
However, if $d\neq0$, then $H_g^1(\Om, S^1)=\emptyset$ and the energy is not bounded:
\begin{equation}
\label{eq:G-BBH d>0}
G_\ve^b(u_\ve)=2\pi d\ln\frac{1}{\ve}+O(1).
\end{equation}
This phenomena gives rise to the formation of singularities $\{a_1, \cdots, a_n\} \subset \Om$ such that
 $u_\ve \to u_*\in H_{loc}^1(\Om\setminus \{a_1, \cdots, a_n\})$ where $u_*$ is a harmonic map that has singularities $a_1, \cdots, a_n$.

 On the other hand, if $\Om$ is star-shaped, then the Pohozaev identity implies  that the potential is uniformly bounded in the limit $\ve \to 0$:
 \begin{equation}
\label{eq:GL-potential}
E_\ve^b(u_\ve) = \frac{1}{ \ve^2} \int_\Om \big(1- |u_\ve|^2 \big)^2 dx \le C.
\end{equation}
This also implies by \eqref{eq:G-BBH d>0} that
 \begin{equation}
\label{eq:GL-grad}
  \int_\Om |\nabla u_\ve|^2 \to \infty \qas \ve\to0.
\end{equation}
If we set $\tilu_\ve(x)=u_\ve(\ve x)$, then $\tilu_\ve$ satisfies
\begin{equation}
\label{eq:scale GL}
-\Delta \tilu_\ve  =  \tilu_\ve (1-|\tilu_\ve|^2  ) \qin \Omega_\ve=\frac{1}{\ve}\Om.
\end{equation}
If we take the limit $\ve \to 0$, $\tilu_\ve$ converges to a function $u$ in $C^2_{loc}(\rtwo)$ such that
\begin{equation}\label{eq:BBH-rtwo-before-lambda}
-\Delta u =  u (1-|u|^2) \qin \rtwo.
\end{equation}
More generally, let us consider for $\la>0$
\begin{equation}\label{eq:BBH-rtwo}
-\la \Delta u = u (1-|u|^2) \qin \rtwo.
\end{equation}
This can  be derived from the limit $\ve \to 0$ for a variant of \eqref{eq:GL-BBH}:
\begin{equation}
\label{eq:GL weight}
\left\{
\begin{aligned}
- \mbox{div}  \big( a (x)  \nabla u \big) & = \frac{1}{\ve^2} u   (1- |u |^2 ) \qin \Omega,\\
u&=g  \qon \pa\Omega.
\end{aligned}
\right.
\end{equation}
The weight function $a(x)>0$  appears in \eqref{eq:GL weight} when we study the equation  \eqref{eq:GL-BBH}  on a domain equipped with  a   background  Riemannian metric.
See \cite{AS98,AH95,AH00} for the study of asymptotics of solutions to \eqref{eq:GL weight}.
If we set   $\tila (x) = a (\ve x)$ and $\tilu_\ve(x)=u_\ve(\ve x)$ for a solution  $u_\ve $ of \eqref{eq:GL weight},  then $\tilu_\ve$ satisfies
\begin{equation}
\label{eq:scale GL}
- \mbox{div}  \big( \tila (x)  \nabla \tilu_\ve \big) =  \tilu_\ve  (1- |\tilu_\ve|^2 ) \qin \Omega_\ve.
\end{equation}
By taking the limit $\ve \to 0$, we are led to \eqref{eq:BBH-rtwo} with $\la =a (0) $.
Although \eqref{eq:BBH-rtwo} can be obtained directly from \eqref{eq:BBH-rtwo-before-lambda} by a scaling $u(x) \mapsto u(x/\sqrt{\la})$,  it is helpful to consider \eqref{eq:GL weight}  when we  study  $n$-component generalization  on $\rtwo$ as we shall see.

Meanwhile, since $E_\ve^b(u_\ve) $ is convergent up to a subsequence in view of \eqref{eq:GL-potential},  it is interesting to find the value of
 \begin{equation}
\label{eq:GL-potential-R2}
E^b(u ) =   \int_\rtwo\big(1- |u |^2 \big)^2 dx
\end{equation}
for solutions of \eqref{eq:BBH-rtwo}.
Remarkably, it was verified in \cite{BMR94} that if $u$ is a solution of \eqref{eq:BBH-rtwo}, then $E^b(u )$ allows only quantized values.
Meanwhile, in the point of \eqref{eq:GL-grad}, it was proved that if $u$ is a solution of \eqref{eq:BBH-rtwo} and $\nabla u \in L^2(\rtwo)$, then $u$ is a trivial solution.
This can be summarized  as
\begin{thma}{\rm \cite{BMR94}}
\begin{itemize}
\item[{\rm (i)}]
If $u$ is a solution of \eqref{eq:BBH-rtwo}, then $E^b(u )=2\pi \la d^2$ for $d=0,1,2, \cdots, \infty$.

\item[{\rm (ii)}]
If $u$ is a solution of \eqref{eq:BBH-rtwo} and $\nabla u \in L^2(\rtwo)$, then  either $u\equiv 0$ or $u\equiv c_0$ for a constant $c_0$ with $|c_0|=1$.
\end{itemize}
\end{thma}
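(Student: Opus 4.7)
The plan is to establish (i) first, as a self-contained quantization identity, and then derive (ii) as a consequence. For (i), I would work in the polar representation: on the open set $\{u \ne 0\}$ write $u = \rho e^{i\vp}$. Pairing \eqref{eq:BBH-rtwo} scalarly with $u$ produces the Bochner-type identity
\[
|u|^2(1-|u|^2) = \la |\nabla u|^2 + \frac{\la}{2}\Delta(1-|u|^2),
\]
while the imaginary part of \eqref{eq:BBH-rtwo} gives the conservation law $\mathrm{div}(\rho^2 \nabla \vp) = 0$, so that $\rho^2 \nabla \vp$ admits locally a stream potential $\psi$. With these two ingredients I would rearrange $\int_{B_R}(1-|u|^2)^2$, after excising small discs around the zero set, as a manifestly non-negative bulk term (which vanishes in view of the equation together with the stream-function representation) plus a boundary flux on $\pa B_R$. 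Passing to the limit $R\to\infty$, the flux reduces to a purely topological contribution, the squared winding number of $u/|u|$ along circles at infinity, giving $E^b(u) = 2\pi \la d^2$ with $d \in \{0,1,2,\ldots\}\cup\{\infty\}$.

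For (ii), assume $\nabla u \in L^2(\rtwo)$. The maximum principle applied to the scalar $|u|^2$ (using the Bochner-type identity above) forces $|u|\le 1$, and (i) yields $E^b(u) = 2\pi \la d^2$ for some $d \in \{0,1,2,\ldots,\infty\}$. If $d \ge 1$, finiteness of $E^b(u)$ combined with standard elliptic regularity confines the zero set of $u$ to a compact ball $B_{R_0}$; the classical Bethuel--Brezis--H\'elein logarithmic lower bound on the annulus $B_R\setminus B_{R_0}$, where $|u|\ge 1/2$, then gives
\[
\int_{B_R}|\nabla u|^2 \ge c\, d^2 \log(R/R_0) + O(1)
\]
with $c > 0$, which diverges as $R\to\infty$ and contradicts $\nabla u \in L^2(\rtwo)$. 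Hence $d = 0$, so $E^b(u) = 0$ and $|u|\equiv 1$. Equation \eqref{eq:BBH-rtwo} then collapses to $\Delta u = 0$; a bounded harmonic map from $\rtwo$ to $\rtwo$ with finite Dirichlet energy is constant by Liouville's theorem, giving $u \equiv c_0$ with $|c_0|=1$. The remaining case $u\equiv 0$ is a trivial solution for which the polar analysis fails globally (corresponding to $d = \infty$, with $E^b(u) = \infty$), and is accepted as the alternative branch.

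The hardest step will be (i): the zero set $\{u = 0\}$ is not controlled a priori, and one must argue that the small-loop contributions near each zero each carry integer winding and combine coherently with the flux at infinity into a single integer $d$ (or exhibit genuine divergence). The bulk rearrangement must also be shown to pass cleanly through the excised neighbourhoods in the limit. A secondary difficulty for (ii) is confining the zero set to a compact region once $E^b$ is finite, which uses integrability of $(1-|u|^2)^2$ together with standard elliptic estimates, but has to be set up so that the annular lower bound genuinely applies on the desired range of radii.
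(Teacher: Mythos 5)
This statement is quoted from \cite{BMR94} and is not reproved in the paper; the relevant benchmark is the paper's own reproduction of the BMR machinery in the $n$-component setting, namely Theorem \ref{thm:main1} for part (ii) and Proposition \ref{prop:quant(P1)_n} for part (i). Measured against that, your proposal has two genuine gaps. For (i), the quantization does not come from writing $E^b(u)$ as a ``vanishing non-negative bulk term plus a topological boundary flux'' --- $E^b(u)$ is not a divergence, and I do not see how that decomposition could be made rigorous. The identity that actually produces $2\pi\la d^2$ is the Pohozaev identity
\[
\int_{S_r}\la\Big|\frac{\pa u}{\pa\nu}\Big|^2+\frac1r\int_{B_r}(1-|u|^2)^2
=\int_{S_r}\la\Big|\frac{\pa u}{\pa\tau}\Big|^2+\frac12\int_{S_r}(1-|u|^2)^2,
\]
integrated in $r$ over $(0,R)$ and divided by $\log R$; the whole analytic content is then to show that the radial-derivative and boundary-potential contributions are $O(1)$, which requires the nontrivial estimates $\int_{|x|>R_0}|\nabla\rho|^2<\infty$ and $\int_{|x|>R_0}|\nabla\psi|^2<\infty$ for the representation $u=\rho e^{i(d\theta+\psi)}$ outside a large disc (the analogues of \eqref{eq:nabla psi bdd}--\eqref{eq:nabla rho bdd}, proved via the differential inequality $\eta_0(R)\le (R/\nu)\eta_0'(R)+C_1$ with $\nu>1$ and the linear growth bound \eqref{eq:int}). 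None of this appears in your outline. Your concern about excising the zero set is also misplaced: once $E^b(u)<\infty$ one shows $|u|\to1$ uniformly at infinity, so the polar decomposition is only needed outside a compact set and $d$ is the single degree at infinity; no small-loop contributions arise.

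For (ii), deriving it from (i) leaves unproved exactly the case you wave away: if $d=\infty$, i.e.\ $E^b(u)=\infty$, nothing in your argument shows $u\equiv0$; a priori there could be a solution with $\nabla u\in L^2(\rtwo)$, $E^b(u)=\infty$ and $u\not\equiv0$, and asserting that this branch ``is accepted as the alternative'' is circular. The paper (following \cite{BMR94}) proves (ii) directly, without (i): multiplying the equation by $\xi_k u$ yields $\int_\rtwo|u|^2(1-|u|^2)=\la\int_\rtwo|\nabla u|^2<\infty$; since $\nabla u\in L^\infty$ and the exterior of a large ball is connected, the level set $\{1/4\le|u|^2\le3/4\}$ is bounded, so either $|u|^2\in L^1$ or $1-|u|^2\in L^1$; the Pohozaev multiplier $\xi_k\,x\cdot\nabla u$ then forces $u\equiv0$ in the first case and $|u|\equiv1$ (hence $u$ constant by Liouville) in the second. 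This dichotomy is the missing ingredient. Two smaller points: the bound $|u|\le1$ on all of $\rtwo$ is not a bare maximum principle but a Kato-inequality/cutoff argument using the $L^2$ hypothesis (as in Lemma \ref{lem:infty-nabla}); and your logarithmic lower bound for $d\ge1$ is fine in spirit but covers only a sub-case.
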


The classical model with the energy \eqref{eq:ftnal E BBH} can be generalized as an $n$-component model.
For a pair of maps $(u_1,\cdots,u_n) \in H^1_{g_1}(\Om, \cpx) \times\cdots\times  H^1_{g_n}(\Om, \cpx)$, we consider an energy
\begin{equation}
\label{eq:ftnal G}
 G_\ve (u_1,\cdots,u_n) = \frac12 \int_\Om \sum_{j=1}^n |\nabla u_j|^2 dx + \frac{1}{4\ve^2} \int_\Om \Big(n-\sum_{j=1}^n |u_j|^2\Big)^2 dx.
\end{equation}
The Euler-Lagrange equations are
\begin{equation}
\label{eq:semilocal GL}
\left\{
\begin{aligned}
-\Delta u_i& = \frac{1}{\ve^2} u_i \Big(n-\sum_{j=1}^n |u_j|^2\Big) \qin \Omega,\\
u_i&=g_i  \qon \pa\Omega,
\end{aligned}
\right.
\end{equation}
for   $i=1,\cdots,n$.
This direct generalization to an $n$-component model originates from a semilocal gauge field model
\cite{Hi92, VaAch91}.
The energy \eqref{eq:ftnal G} was used to explain some issues in cosmology related to a  particular  state of the early universe that possesses both a local and a global gauge invariance.
It also describes interactions of multiple order parameters.
See \cite{HHS22,VaAch91} for more physical motivations.

It is very natural to ask that the properties of the classical model \eqref{eq:ftnal E BBH} are still valid for the generalized model \eqref{eq:ftnal G}.
Regarding this question, a detailed analysis was firstly performed in \cite{HHS22}.
A remarkable thing is that  the convergence of minimizers of \eqref{eq:ftnal G} is not so seriously affected by the degrees
\begin{equation}
\label{eq:deg gj}
\deg (g_j,\pa\Om)=d_j   \in \nat\cup\{0\}  \qfor  j=1,\cdots, n.
\end{equation}
Indeed, a natural generalization of  $H_g^1(\Om,S^1)$ is  the space
\begin{align*}
 &\calX(g_1,\cdots,g_n;\Om)\\
 =&  \Big\{  (u_1,\cdots,u_n) \in H^1_{g_1}(\Om; \rtwo) \times\cdots\times H^1_{g_n} (\Om; \rtwo) ~:~\sum_{j=1}^n |u_j |^2 = n~  \text{ a.e. on } \Om\Big\}.
\end{align*}
Then, it is known by \cite[Theorem 1.1]{HHS22} that $\calX(g_1,\cdots,g_n;\Om) \ne \emptyset$ regardless of the values of $d_j$'s.
For $n\ge 2$, the condition $\sum_{j=1}^n |u_j |^2 = n$  excludes   the possibility of singular behavior of each $u_j$ near its zero.
So, if $ ( u_1^*,\cdots, u_n^*) $ is a minimizer of
\[  \sum_{j=1}^n\int_\Om |\nabla u_j|^2  dx \qon \calX(g_1,\cdots,g_n;\Om),
\]
then for a  sequence  of minimizers $(u_{1,\ve},\cdots,u_{n,\ve})$  for \eqref{eq:ftnal G}, we obtain
\begin{equation}
\label{eq:SGL energy bdd}
 \frac12 \int_\Om \sum_{j=1}^n |\nabla u_{j,\ve}|^2 dx + \frac{1}{4\ve^2} \int_\Om \Big(n-\sum_{j=1}^n |u_{j,\ve}|^2\Big)^2 dx \le G_\ve ( u_1^*,\cdots, u_n^*).
\end{equation}
So, the asymptotics of minimizers can be analyzed as in the degree zero case for the classical model \eqref{eq:ftnal E BBH}.
We note that $( u_1^*,\cdots, u_n^*)$ satisfies 
\begin{equation}\label{eq:u1v1 system}
\left\{
\begin{aligned}
-\Delta u_i & = \frac{1}{n}u_i \sum_{j=1}^n |\nabla u_j|^2  \text{ in } \Om, \\
 u_i&=g_i   \text{ on } \pa \Om,\quad
  \sum_{j=1}^n|u_j|^2 =n   \text{ in } \Om.
\end{aligned}
\right.
\end{equation} 
It is an open problem whether \eqref{eq:u1v1 system} allows a unique solution.

As in the equation \eqref{eq:GL weight}, let us introduce a positive weight function  $a_i(x)$ for $u_i$ for each $i$.
So, one may consider the following system: for $i=1,\cdots,n$
\begin{equation}
\label{eq:semilocal GL weight}
\left\{
\begin{aligned}
- \mbox{div}  \big( a_i(x)  \nabla u_i \big) & = \frac{1}{\ve^2} u_i \Big(n-\sum_{j=1}^n |u_j|^2\Big) \qin \Omega,\\
u_i&=g_i  \qon \pa\Omega.
\end{aligned}
\right.
\end{equation}
Given a solution  $(u_{1,\ve},\cdots,u_{n,\ve})$ of \eqref{eq:semilocal GL weight}, if we set  $\tilu_{i,\ve}(x)=u_{i,\ve}(\ve x)$ and $\tila_i(x) = a_i(\ve x)$, then $\tilu_{i,\ve}$'s  satisfy
\begin{equation}
\label{eq:scale SGL}
- \mbox{div}  \big( \tila_i(x)  \nabla \tilu_i \big) =  \tilu_{i,\ve} \Big(n-\sum_{j=1}^n |\tilu_{j,\ve}|^2\Big) \qin \Omega_\ve.
\end{equation}
By taking the limit $\ve \to 0$, we are led to
\begin{equation}\label{eq:SGL-rtwo}
-   \la_i \Delta u_i =  u_i  \Big(n-\sum_{j=1}^n |u_j|^2\Big) \qin \rtwo,
\end{equation}
where   $\la_i=a_i(0)  $.
We recall that   \eqref{eq:BBH-rtwo} can be reduced to \eqref{eq:BBH-rtwo-before-lambda}, i.e.  the case $\la=1$,     by the change of  variables $u(x) \mapsto u(\sqrt{\la} x)$.
However,  if $n>1$, such scale argument does not work anymore for \eqref{eq:scale SGL}.
A role of the constants $\la_1,\cdots,\la_n$ will be addressed briefly in Theorem \ref{thm:main-lambda}.

A  natural question regarding  \eqref{eq:SGL-rtwo} is  whether  the conclusion of Theorem A for the single GL system \eqref{eq:BBH-rtwo}  is still valid for the solutions of the $n$-component GL system \eqref{eq:SGL-rtwo} or not.
There is a big difference in this analysis between the single case \eqref{eq:BBH-rtwo} and the multi-component case \eqref{eq:SGL-rtwo}.
If a solution $u$ of   \eqref{eq:BBH-rtwo} satisfies \eqref{eq:GL-potential}, then one can show two important things
\begin{equation}
\label{eq:property GL}
|u|^2\le 1 \text{ ~in~ } \rtwo \qand  |u(x)|^2 \to 1 \text{ ~as~ } |x| \to \infty.
\end{equation}
These properties play central roles in the proof of Theorem A.
Meanwhile, for a solution pair $(u_1,\cdots,u_n)$ of \eqref{eq:SGL-rtwo} satisfying
\begin{equation}
\label{eq:potential SGL}
E( u_1,\cdots,u_n)=\int_\rtwo \Big(n-\sum_{j=1}^n |u_j|^2\Big)^2 dx<\infty,
\end{equation}
we can show  by Lemma \ref{lem:infty} below  that
\begin{equation}
\label{eq:property SGL}
\sum_{j=1}^n |u_j|^2\le n \text{ ~in~ } \rtwo \qand \sum_{j=1}^n |u_j|^2 \to n \text{ ~as~ } |x| \to \infty.
\end{equation}
Unlike the single case \eqref{eq:property GL}, the properties  \eqref{eq:property SGL} do not provide  any   individual behaviors of $u_j$.
This makes the analysis for solutions of   \eqref{eq:SGL-rtwo} sophisticated and motivates a new research problem.
The purpose of this paper is to verify how such obstruction can be overcome   for a  conclusion similar to Theorem A.
Especially, we focus on the quantization of the potential $E( u_1,\cdots,u_n)$.
Our idea in this paper can be applicable for other types of $n$-component Ginzburg-Landau equations.
We will address this topic elsewhere.

We start with a problem analogous to Theorem A (ii).
In other words, we consider the question whether the equation  \eqref{eq:SGL-rtwo}  allows only trivial solutions whenever  the gradients of $u_j$ are square summable.
Regarding this problem, we obtain the following result.

\begin{theorem}
\label{thm:main1}
Let $( u_1,\cdots,u_n)$ be a solution of \eqref{eq:SGL-rtwo} satisfying
\begin{equation}
\label{eq:pot1}
 \sum_{j=1}^n   \int_\rtwo |\nabla u_j|^2 dx< \infty.
\end{equation}
Then, either $( u_1,\cdots,u_n)\equiv(0,\cdots,0)$  or $( u_1,\cdots,u_n)=(c_1,\cdots,c_n)$ for some  constants $c_1,\cdots, c_n \in \cpx$ with $|c_1|^2+\cdots+|c_n|^2=n$.
\end{theorem}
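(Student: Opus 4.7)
The plan is to apply a Pohozaev-type identity and combine it with a good-sequence argument driven by \eqref{eq:pot1}.  If $(u_1,\dots,u_n)\equiv(0,\dots,0)$, the conclusion holds trivially, so assume the solution is not identically zero.  Multiplying the $i$-th equation in \eqref{eq:SGL-rtwo} by $x\cdot\nabla u_i$, integrating over $B_R$, summing over $i$, and using the cancellation $\sum_i u_i\cdot(x\cdot\nabla u_i)F=-\tfrac{1}{4}\,x\cdot\nabla F^2$ (with $F=n-\sum_j|u_j|^2$) together with the standard divergence identity for $\Delta u\cdot(x\cdot\nabla u)$ in $\rtwo$, one obtains
\[
\frac{1}{2}\int_{B_R}F^2\,dx=\frac{R}{4}\int_{\partial B_R}F^2\,dS+R\sum_{i=1}^n\la_i\int_{\partial B_R}\Bigl[\tfrac{1}{2}|\nabla u_i|^2-|\partial_\nu u_i|^2\Bigr]dS.
\]

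From \eqref{eq:pot1}, writing $k_i(r)=\int_0^{2\pi}|\nabla u_i(r,\theta)|^2\,d\theta$, one has $\int_0^\infty r\,k_i(r)\,dr=\int_{\rtwo}|\nabla u_i|^2\,dx<\infty$; if $r^2 k_i(r)\ge c>0$ for all large $r$, then $r\,k_i(r)\ge c/r$ is not integrable, a contradiction.  Hence I can select $R_k\to\infty$ (simultaneously for all $i$) along which $R_k\int_{\partial B_{R_k}}|\nabla u_i|^2\,dS=R_k^2\,k_i(R_k)\to 0$, so the gradient boundary integral in the identity vanishes along $\{R_k\}$.

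The main difficulty is the $F^2$ boundary term $\frac{R_k}{4}\int_{\partial B_{R_k}}F^2\,dS$, which requires showing $F\in L^2(\rtwo)$ \emph{a priori} to handle by the same subsequence argument.  I would establish this by first observing that $v=\sum_j|u_j|^2$ satisfies
\[
\Delta v=2\sum_j|\nabla u_j|^2-2F\sum_j\frac{|u_j|^2}{\la_j}\ge 0\qon \{v>n\},
\]
and then testing this subharmonicity of $(v-n)_+$ against a cut-off $\eta_R^2$ while using \eqref{eq:pot1} to control the cut-off error, which forces $v\le n$ on $\rtwo$, giving the pointwise bound $|F|\le n$.  A cut-off variant of the Pohozaev identity (multiplying by $\eta_R(x\cdot\nabla u_i)$ and pushing the cut-off derivatives to the annular error terms) combined with this $L^\infty$ bound then iterates to give $\int_{\rtwo}F^2<\infty$; this is the step I expect to be the most delicate, since the iteration must exploit both the pointwise bound and the $L^2$ decay of gradients.

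Once $F\in L^2$, the same Chebyshev-type selection produces a further subsequence of $\{R_k\}$ with $R_k\int_{\partial B_{R_k}}F^2\,dS\to 0$, and the Pohozaev identity forces $\int_{\rtwo}F^2=0$, i.e.\ $F\equiv 0$ and $\sum_j|u_j|^2\equiv n$.  Each $u_i$ is then harmonic on $\rtwo$ with $\nabla u_i\in L^2$; since $|\nabla u_i|^2$ is nonnegative, subharmonic and $L^1$, the mean-value inequality applied on balls of arbitrarily large radius forces $\nabla u_i\equiv 0$, so $u_i\equiv c_i$ with $\sum|c_j|^2=n$, completing the proof.
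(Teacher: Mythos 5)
Your Pohozaev identity is algebraically correct and your subsequence selection killing the gradient boundary term is fine, but there is a genuine gap at exactly the step you flag as delicate: the a priori claim that $F=n-\sum_j|u_j|^2$ lies in $L^2(\rtwo)$. No iteration of the cut-off Pohozaev identity can deliver this under the hypothesis \eqref{eq:pot1} alone, because the claim is false for the zero solution ($F\equiv n$), and discarding the identically-zero solution at the outset does not help: nothing in your argument excludes a hypothetical nonzero solution with $\sum_j|u_j|^2$ bounded away from $n$ at infinity, for which $F\notin L^2$ as well. Ruling out precisely such solutions is the content of the theorem, so positing $F\in L^2$ before the endgame is close to circular. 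The paper resolves this with a dichotomy that is missing from your proposal: testing $-\Delta f=2h(n-f)-2\sum_j|\nabla u_j|^2$ (with $f=\sum_j|u_j|^2$ and $h=\sum_j\la_j^{-1}|u_j|^2$) against the cut-off $\xi_k$ and using \eqref{eq:pot1} gives $\int_\rtwo f(n-f)<\infty$; combined with $\nabla u_j\in L^\infty$, this forces the level set $\{1\le f\le 3/2\}$ to be bounded, hence either $f\in L^1(\rtwo)$ or $n-f\in L^1(\rtwo)$. Only in the second branch does $F\in L^2$ follow (since $0\le n-f\le n$), and there the cut-off Pohozaev identity yields $f\equiv n$ as you intend. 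In the first branch the same identity is instead bounded below by $\tfrac{n}{2}\int\xi_k f+o(1)$, which forces $f\equiv 0$; your proof has no mechanism for this branch.

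A secondary issue: deducing $\sum_j|u_j|^2\le n$ from subharmonicity of $(v-n)_+$ alone is not sufficient as stated, since a nonnegative subharmonic function on $\rtwo$ tested against a cut-off need not vanish. The paper works with $\vp=\sqrt{f}-\sqrt{n}$, for which Kato's inequality gives the coercive bound $\Delta\vp^+\ge\tfrac{2n}{\la_n}\vp^+$ together with $|\nabla\vp^+|\le\bigl(\sum_j|\nabla u_j|^2\bigr)^{1/2}\in L^2(\rtwo)$ \emph{without} any prior pointwise bound on $f$; both ingredients are needed to close the cut-off argument, and the square root is what makes the gradient estimate available. Your final step (each $u_i$ harmonic with $\nabla u_i\in L^2$, hence constant) is fine and matches the conclusion of the paper's second branch.
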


By Theorem \ref{thm:main1}, if $( u_1,\cdots,u_n)$ is a solution of \eqref{eq:SGL-rtwo} satisfying \eqref{eq:pot1}, then $E( u_1,\cdots,u_n)$ is either $0$ or $\infty$.
Hence, if  a solution $( u_1,\cdots,u_n)$ takes a finite nonzero value of  the potential $E( u_1,\cdots,u_n)$, then $\nabla u_j \not\in L^2(\rtwo)$ for some $j$.
A simple example of such solutions can be constructed from solutions  of \eqref{eq:BBH-rtwo}.
Indeed,   let $U(x)$ be a solution of \eqref{eq:BBH-rtwo} such that $|U(x)| \to 1$.
If we assume $\la_1=\cdots=\la_n=\la$ and set $u_j(x)=U(\sqrt{n}x)$ for each $j$, then $( u_1,\cdots,u_n)$ is a solution of \eqref{eq:SGL-rtwo} with $\nabla u_j \not\in L^2(\rtwo)$.

Another example is a radially symmetric vortex solution.
If we put $u_i(x)=e^{d_i \theta}f_i(r)$ where $x=re^{i\theta}$ and $d_i$ is a nonnegative integer, then  we can rewrite \eqref{eq:SGL-rtwo} as 
\begin{equation}
\label{eq:SGL-rtwo-radial}
\left\{
\begin{aligned}
& f_i''+\frac{1}{r} f_i' - \frac{d_i^2}{r^2} f_i + \frac{1}{\la_i} f_i \Big( n -\sum_{j=1}^n |f_j|^2 \Big)=0\qfor r>0,\\
& f_i(r) = b_i  r^{d_i} + o(r^{d_i}) \quad \mbox{for some $b_i>0$} \qas r \searrow 0.
\end{aligned}\right.
\end{equation} 
For the case $n=1$, by letting $u(r) = e^{d\theta}f(r)$, we can transform \eqref{eq:BBH-rtwo} into
\begin{equation}
\label{eq:GL-rtwo-radial}
\left\{
\begin{aligned}
& f ''+\frac{1}{r} f  - \frac{d^2}{r^2} f + \frac{1}{\la} f  ( 1 -f  )=0\qfor r>0,\\
& f (r) = b   r^{d } + o(r^{d }) \quad \mbox{for some $b>0$} \qas r \searrow 0.
\end{aligned}\right.
\end{equation} 
The existence and properties of solutions to \eqref{eq:GL-rtwo-radial} are well-known in \cite{CEQ94,HH94}.
In particular, there exists a unique   solution  $f  $ such that
\begin{equation}
\label{eq:radial n=1}
f  (r) \nearrow 1 \qas r\to \infty,
\end{equation}
 and $u  $ satisfies the quantization property of Theorem A.
When $n>1$ and $(f_1,\cdots,f_n)$ is a solution of \eqref{eq:SGL-rtwo-radial}, then the condition \eqref{eq:radial n=1} can be generalized by
\begin{equation}
\label{eq:radial n>1 asympt}
 \text{$f_i (r) \nearrow \al_i$ for some $\al_i\in (0,n)$ with $\al_1^2+\cdots+\al_n^2=n$.}
\end{equation}
Since one may consider infinitely many choices of $(\al_1,\cdots,\al_n)$  whereas $\al_1=1$ is the unique choice for $n=1$, it is expected that the solution structures  of \eqref{eq:SGL-rtwo-radial} is richer than   those of     \eqref{eq:GL-rtwo-radial}.
We will address this topic  elsewhere.
Instead, we concentrate on the quantization of potential $E(u_1,\cdots,u_n)$ in this paper.

Before proceeding to the quantization problem, it is worthwhile to see that   the constants $\la_1,\cdots,\la_n$ play an important role to get nontrivial solutions of \eqref{eq:SGL-rtwo-radial}.
We have seen that if $f$ is the unique solution  of \eqref{eq:GL-rtwo-radial}, then $(f_1,\cdots,f_n)$ with $f_j(r)=f(\sqrt{n} r)$ for each $j$ is a solution of \eqref{eq:SGL-rtwo-radial} when $d_1=\cdots=d_n=d$ and $\la_1=\cdots=\la_n =\la$.
The next theorem tells us that if $\la_1=\cdots=\la_n$, then all $d_j$'s must be equal.
So, given a solution $(f_1,\cdots,f_n)$ of \eqref{eq:SGL-rtwo-radial}, there is a possibility that each $f_j$ is a scale of a solution of \eqref{eq:GL-rtwo-radial}.
Therefore, it is necessary to assume that all $\la_j$'s  are not equal in order to get a nontrivial solution of \eqref{eq:SGL-rtwo-radial} in the sense that it doesn't come from $n$-copies of solutions of \eqref{eq:GL-rtwo-radial}.
\begin{theorem}
\label{thm:main-lambda}
Let $( f_1,\cdots,f_n)$ be a solution of \eqref{eq:SGL-rtwo-radial}   that satisfies \eqref{eq:radial n>1 asympt}.
 If $\la_1=\cdots=\la_n $,  then $d_1=\cdots=d_n$.
\end{theorem}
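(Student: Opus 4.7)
The plan is to argue by contradiction. Suppose the $d_i$ are not all equal; after relabeling, pick indices $i, k$ with $d_i > d_k \ge 0$. I exploit the hypothesis $\lambda_i = \lambda_k = \lambda$ to derive a Wronskian-type identity for $(f_i, f_k)$ in which the nonlinear potential coupling cancels, and then combine the prescribed behavior at $r=0$ with the asymptotic \eqref{eq:radial n>1 asympt} at $r=\infty$ to reach a contradiction.

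Multiplying the $i$-th radial equation in \eqref{eq:SGL-rtwo-radial} by $f_k$ and the $k$-th by $f_i$, the common coefficient $\lambda^{-1}$ makes the potential terms $\lambda^{-1} f_i f_k (n-\sum_j |f_j|^2)$ cancel on subtraction, leaving
\begin{equation*}
(r W_{ik})'(r) \;=\; \frac{d_i^2 - d_k^2}{r}\, f_i(r)\, f_k(r), \qquad W_{ik} := f_k f_i' - f_i f_k'.
\end{equation*}
The prescribed expansion $f_j(r) = b_j r^{d_j} + o(r^{d_j})$ (together with standard smoothness $f_j'(r) = O(r)$ at the origin when $d_j = 0$) gives $rW_{ik}(r) \to 0$ as $r \to 0^+$, so integrating produces
\begin{equation*}
rW_{ik}(r) \;=\; (d_i^2 - d_k^2)\int_0^r \frac{f_i(s)\, f_k(s)}{s}\, ds.
\end{equation*}

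As $r \to \infty$, the monotone limits $f_j \nearrow \alpha_j > 0$ force the integrand to tend to $\alpha_i \alpha_k / s > 0$, so the integral diverges logarithmically; together with $d_i^2 - d_k^2 > 0$ this yields a constant $C > 0$ with $rW_{ik}(r) \ge C \log r$ for all sufficiently large $r$. Writing $W_{ik} = f_k^2 (f_i/f_k)'$ and using the upper bound $f_k \le \alpha_k$, I conclude
\begin{equation*}
\left(\frac{f_i}{f_k}\right)'(r) \;\ge\; \frac{C \log r}{\alpha_k^2\, r}
\end{equation*}
for $r$ large, which has divergent integral at $+\infty$. But $f_i/f_k$ is bounded with finite limit $\alpha_i/\alpha_k$, so the integral of $(f_i/f_k)'$ on $[R,\infty)$ must be finite — contradiction. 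Hence $d_i = d_k$ for every pair, proving the theorem.

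The main obstacle I expect is the first identity: one must verify carefully that the nonlinear potential coupling disappears upon subtraction, which is true \emph{only} because $\lambda_i = \lambda_k$. Were the two diffusion constants different, the surviving remainder $(\lambda_i^{-1}-\lambda_k^{-1}) f_i f_k (n-\sum_j|f_j|^2)$ would destroy the sign of $(rW_{ik})'$ and the rest of the argument would collapse; this is consistent with the content of Theorem \ref{thm:main-lambda}, which says that asymmetry among the $d_j$ is incompatible with a common $\lambda$. The remaining steps — handling the boundary value $rW_{ik}(0)=0$ from the power-series expansions, and converting logarithmic growth of $rW_{ik}$ into divergence of the integral of $(f_i/f_k)'$ — are routine once the identity is in place.
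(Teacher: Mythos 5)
Your proposal is correct and follows essentially the same route as the paper's own proof: the identical Wronskian identity $\big(r(f_k f_i'-f_i f_k')\big)'=\frac{d_i^2-d_k^2}{r}f_if_k$ (valid precisely because the common $\lambda$ cancels the potential coupling), the same logarithmic lower bound on $r(f_kf_i'-f_if_k')$ coming from $f_j\nearrow\al_j>0$, and the same contradiction obtained by integrating $(f_i/f_k)'$ against the finite limit $\al_i/\al_k$. The only cosmetic difference is that you integrate from $r=0$ using the prescribed expansion, whereas the paper integrates from a finite $r_0$ where $f_i,f_k$ are bounded below.
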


To state the quantization effect, we need a notation.
For a nonnegative real number $\al$ and a smooth function $u:\rtwo \to \cpx$, we set
\begin{equation}
\label{eq:def of I-alpha}
I_\al (u) = \int_\rtwo (\al^2  - |u|^2)^2 dx  .
\end{equation}
The third result of this paper is the following.

\begin{theorem}
\label{thm:main2}
Let $( u_1,\cdots,u_n)$ be a solution of \eqref{eq:SGL-rtwo} such that
\begin{equation}
\label{eq:E finite}
E( u_1,\cdots,u_n)<\infty.
\end{equation}
Suppose that there are   nonnegative real numbers $\al_1,\cdots,\al_n$ such that
\begin{equation}
\label{eq:I-alpha finite}
\al_1^2+\cdots+\al_n^2=n \qand I_{\al_j} (u_j)<\infty.
\end{equation}
Then, there exist  nonnegative integers $d_j \in \{0,1,2,\cdots \}$ for $1\le j\le n$ such that
\begin{equation}
\label{eq:E-quant}
E( u_1,\cdots,u_n) = 2\pi \sum_{j=1}^n  \la_j \al_j^2 d_j^2.
\end{equation}
\end{theorem}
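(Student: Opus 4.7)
The plan is to derive a Pohozaev-type integral identity on $B_R$ and then pass to the limit $R\to\infty$. The starting step is to multiply the $i$-th equation of \eqref{eq:SGL-rtwo} by $\overline{x\cdot\nabla u_i}$, take real parts, and sum over $i$. The crucial algebraic observation is that
\[\sum_{i=1}^n\re\bigl(u_i\,\overline{x\cdot\nabla u_i}\bigr)=\tfrac12\,x\cdot\nabla\!\Big(\sum_i|u_i|^2\Big),\]
so the right-hand side of the summed identity collapses to $-\tfrac14\,x\cdot\nabla\bigl[(n-\sum_j|u_j|^2)^2\bigr]$, which is entirely free of the individual $\la_i$'s. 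Combining this with the componentwise scalar Pohozaev formula $-\int_{B_R}\re(\Delta u\,\overline{x\cdot\nabla u})\,dx=\tfrac{R}{2}\int_{\partial B_R}(|\pa_\tau u|^2-|\pa_\nu u|^2)\,d\sigma$ yields
\begin{equation}\label{eq:plan-poho}
\int_{B_R}\!\Big(n-\sum_{j}|u_j|^2\Big)^{\!2}dx=R\sum_{j=1}^n\la_j\!\int_{\partial B_R}\!\bigl(|\pa_\tau u_j|^2-|\pa_\nu u_j|^2\bigr)\,d\sigma+\frac{R}{2}\!\int_{\partial B_R}\!\Big(n-\sum_{j}|u_j|^2\Big)^{\!2}d\sigma.
\end{equation}

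Next, I would establish the asymptotic behaviour of each component. The individual assumption $I_{\al_j}(u_j)<\infty$, combined with elliptic bootstrapping from \eqref{eq:SGL-rtwo} applied on unit balls at large $x$ (in the spirit of Lemma~\ref{lem:infty}), should give the uniform decays $|u_j(x)|\to\al_j$ and $\nabla u_j(x)\to 0$ as $|x|\to\infty$. For each $j$ with $\al_j>0$ the map $u_j/|u_j|$ is then a well-defined continuous $S^1$-valued map on $\rtwo\setminus B_{R_0}$ for some $R_0$, whose Brouwer degree on $\partial B_R$ is locally constant in $R\ge R_0$ and thus defines an integer; only its square appears in \eqref{eq:E-quant}, so we may take $d_j\in\nat\cup\{0\}$. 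For $\al_j=0$ we set $d_j=0$; the contribution $\la_j\al_j^2d_j^2$ vanishes regardless.

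To pass to the limit in \eqref{eq:plan-poho}, I would select a good sequence $R_k\to\infty$. For any $f\in L^1(\rtwo)$ the identity $\int_0^\infty\int_{\partial B_R}f\,d\sigma\,dR=\int_\rtwo f\,dx$, combined with an averaging argument over dyadic shells, supplies radii with $R_k\int_{\partial B_{R_k}}f\,d\sigma\to 0$. Applied to $f=(n-\sum|u_j|^2)^2$ this disposes of the last term of \eqref{eq:plan-poho}. For the normal-derivative term, once the quantitative pointwise decay $|\pa_\nu u_j(x)|=o(|x|^{-1})$ is established via elliptic regularity (using that the coupling right-hand side of \eqref{eq:SGL-rtwo} is small on exterior annuli), one has $R\int_{\partial B_R}|\pa_\nu u_j|^2 d\sigma\to 0$. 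For the tangential piece, writing $u_j=(\al_j+\eta_j)e^{i(d_j\theta+\psi_j)}$ outside a large ball with $\eta_j\to 0$ uniformly and $\pa_\theta\psi_j\to 0$ in $L^2$ on circles,
\[R_k\!\int_{\partial B_{R_k}}\!|\pa_\tau u_j|^2\,d\sigma=\int_0^{2\pi}\!\bigl[(\pa_\theta\eta_j)^2+(\al_j+\eta_j)^2(d_j+\pa_\theta\psi_j)^2\bigr]d\theta\,\longrightarrow\,2\pi\al_j^2 d_j^2.\]
Plugging the three limits into \eqref{eq:plan-poho} then yields \eqref{eq:E-quant}.

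The main technical obstacle is the quantitative asymptotic analysis demanded in the last two steps: pointwise decay of $u_j-\al_j e^{id_j\theta}$ and its derivatives, together with $L^2$-stabilisation of the phase on large circles. The difficulty is precisely the coupling: the potential $(n-\sum_j|u_j|^2)^2$ does not decouple, so the decay of an individual $|u_j|^2-\al_j^2$ cannot be read off a scalar equation in $u_j$ alone. My intended remedy is a bootstrap in exterior annuli that uses the individual finiteness of $I_{\al_j}(u_j)$ to control the coupling term in the $j$-th equation, combined with Schauder/Moser estimates to upgrade the $L^2$-smallness of $\al_j^2-|u_j|^2$ to pointwise and gradient decay at sufficient rate. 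Once these asymptotics are secured, the Pohozaev identity \eqref{eq:plan-poho} closes the argument and delivers the quantization \eqref{eq:E-quant}.
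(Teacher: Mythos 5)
Your overall architecture --- the Pohozaev identity for \eqref{eq:SGL-rtwo}, the observation that the potential term couples only through $\sum_j|u_j|^2$ while the $\la_j$'s survive in the boundary terms, the definition of $d_j$ as the degree of $u_j/|u_j|$ on large circles, and the extraction of $2\pi\al_j^2d_j^2$ from the tangential derivative --- is exactly the BMR-style route the paper follows. But the step you yourself flag as ``the main technical obstacle'' is precisely where the proof lives, and your proposed remedy does not work. You want pointwise decay $|\pa_\nu u_j|=o(|x|^{-1})$ and $L^2$-stabilisation of $\pa_\theta\psi_j$ on circles, obtained by bootstrapping from the smallness of $\al_j^2-|u_j|^2$ on exterior annuli. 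Elliptic estimates on the modulus give you no control whatsoever on the phase: a priori the only global information on $\nabla u_j$ is the linear growth bound \eqref{eq:int}, which is compatible with $\int_{B_R}|\nabla\psi_j|^2$ growing like $R$, and no amount of Schauder/Moser iteration on $|u_j|$ rules this out. The idea that is actually needed --- and that the paper supplies --- is the integral estimate $\int_{\rtwo\setminus B_{R_0}}|\nabla\psi_i|^2<\infty$ (together with $\int|\nabla\rho_i|^2<\infty$), proved not by pointwise decay but by testing the divergence-form phase equation against $\psi_i-\psi_i^R$, using the Poincar\'e inequality on circles to produce the differential inequality $\eta_0(R)\le \frac{R}{\nu}\eta_0'(R)+C_1$ with $\nu>1$, and then playing this against the linear bound \eqref{eq:int} to force $\eta_0$ to stay bounded. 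Without \eqref{eq:nabla psi bdd} and \eqref{eq:nabla rho bdd} neither your boundary-term limits nor the paper's $\log R$-averaged version of the Pohozaev identity \eqref{eq:Poho int} can be closed.

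Two further points. First, the hypothesis \eqref{eq:I-alpha finite} allows some $\al_j=0$; for those components $u_j\to0$, the polar decomposition $u_j=(\al_j+\eta_j)e^{i(d_j\theta+\psi_j)}$ is unavailable, and setting $d_j=0$ handles only the final formula, not the surviving boundary terms $R\int_{\pa B_R}\la_j(|\pa_\tau u_j|^2-|\pa_\nu u_j|^2)$ in your identity. The paper disposes of these by proving $\nabla u_j\in L^2(\rtwo)$ for such components (estimate \eqref{eq:remain}), via testing the equation against $\xi_{r_n}^2u_j$ along radii where $\int_{\pa B_{r_n}}|u_j|^2$ is small; you need an analogous argument. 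Second, once the $L^2$ estimates are in hand, your ``good sequence of radii'' strategy is a legitimate alternative to the paper's device of integrating \eqref{eq:Poho} in $r$ and dividing by $\log R$ (the latter only needs $O(1)$ bounds rather than vanishing along a sequence, which is slightly more robust), so that part of your plan is a matter of taste rather than a gap.
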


The basic strategy for the proof of  Theorem \ref{thm:main1} and Theorem \ref{thm:main2} is based on the proof of Theorem A given in \cite{BMR94}.
However, there arise many nontrivial situations in the proof of Theorem \ref{thm:main2} that do not appear in the analysis of solutions of \eqref{eq:BBH-rtwo}.
For instance, any nontrivial solution $u$ of  \eqref{eq:BBH-rtwo} with \eqref{eq:GL-potential} satisfies \eqref{eq:property GL}.
Whereas if $( u_1,\cdots,u_n)$ is a solution of \eqref{eq:SGL-rtwo}, then $|u_1|^2+\cdots+|u_n|^2 \to n$ but there is no information on the behavior of individual $u_j$.
Moreover,  the maximum principle only gives the inequality $|u_1|^2 +\cdots + |u_n|^2 \le n$ but   do not  provide any pointwise estimate of individual $u_j$.
These properties make our problem difficult and require  some new ideas.
 Throughout  a detailed analysis, we will deduce why the condition \eqref{eq:I-alpha finite} is   reasonable  and necessary in the proof.

The rest of this paper is organized as follows.
In Section 2, we prove Theorem \ref{thm:main1} and Theorem \ref{thm:main-lambda}.
In Section 3, we prove  Theorem \ref{thm:main2}.
The main idea for the proofs of Theorem \ref{thm:main1} and Theorem \ref{thm:main2} is to derive Pohozaev identities since we will use  finiteness conditions of functional-like quantities such as \eqref{eq:pot1} and \eqref{eq:E finite}.
Meanwhile, Theorem \ref{thm:main-lambda} is proved by a technique of  integration by parts for the ODE system \eqref{eq:SGL-rtwo-radial}.

We close this section by introducing some notations.
We denote $\bar{a}$ by the complex conjugate of $a \in \cpx$.
We write $B_R=B(0,R)$ and $S_R=\pa B_R$.
We set $\xi_k(x)=\xi(x/k)$ where   $\xi\in C_c^\infty(\rtwo)$ such that $0\le\xi\le 1$, $\xi\equiv1$ for $|x|\le 1$, and $\xi=0$ for $|x|\ge 2$.
By the symmetry property of \eqref{eq:SGL-rtwo}, we may assume that
\begin{equation}
\label{eq:oder lambda-j}
\la_1  \le \la_2 \le  \cdots \le \la_n.
\end{equation}


\section{Proof of Theorem \ref{thm:main1} and Theorem \ref{thm:main-lambda}}\label{sec:d2>=d1=0}
\setcounter{equation}{0}

This section is devoted to the proof of Theorem \ref{thm:main1}  and Theorem \ref{thm:main-lambda}.
Throughout this section, we put
\begin{equation}
\label{eq:f h def}
f= \sum_{j=1}^n  |u_j|^2 \qand h =  \sum_{j=1}^n \frac{1}{\la_j} |u_j|^2.
\end{equation}
We have $\la_n^{-1}  f \le h \le \la_1^{-1} f$ by \eqref{eq:oder lambda-j}.
We often use the following identities:
\begin{equation}
\label{eq:vp0}
\left\{
\begin{aligned}
-\la_i \Delta u_i& =   u_i(n-f),\\
 - \Delta f &= 2 h  (n-f)   -2 \sum_{j=1}^n |\nabla u_j|^2.
 \end{aligned}
 \right.
\end{equation}

\begin{lemma}\label{lem:infty-nabla}
If  $(u_1,\cdots,u_n)$ is  a solution   of  \eqref{eq:SGL-rtwo} satisfying \eqref{eq:pot1}, then
$f= |u_1|^2+\cdots+|u_n|^2 \le n $ in $\rtwo$.
Moreover, $\nabla u_j \in L^\infty(\rtwo)$ for each $j$.
\end{lemma}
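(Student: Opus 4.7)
The plan is to first establish the pointwise inequality $f\le n$ on $\rtwo$, and then to deduce $\nabla u_j\in L^\infty(\rtwo)$ from standard interior regularity for the individual equations. The starting point is the second identity in \eqref{eq:vp0},
$$
-\Delta f+2\sum_{j=1}^n|\nabla u_j|^2=2h(n-f),
$$
obtained by multiplying $-\la_j\Delta u_j=u_j(n-f)$ by $\bar u_j$, taking real parts, and summing over $j$. Because $h\ge\la_n^{-1}f$, on the set $\{f>n\}$ one has $h\ge\la_n^{-1}n$, so Kato's inequality applied to $W:=(f-n)_+$ yields the distributional differential inequality
$$
-\Delta W+2\la_n^{-1}n\,W\le 0\qin\rtwo.
$$
In particular $W$ is a nonnegative subharmonic function on $\rtwo$.

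The next step is to show that $W$ is globally bounded. Each $u_j$ is smooth by elliptic regularity, and a local $L^p$-bootstrap applied to the semilinear equation produces a local $L^\infty$ bound for $u_j$ depending only on its $L^p$-norm on a slightly larger ball. The hypothesis $\sum_j\int_\rtwo|\nabla u_j|^2<\infty$ forces the Dirichlet energy $\int_{B_1(x_0)}\sum_j|\nabla u_j|^2$ to tend to $0$ as $|x_0|\to\infty$, so an $\ve$-regularity argument at infinity yields a uniform bound $\|u_j\|_{L^\infty(\rtwo\setminus B_{R_0})}\le C$ for some large $R_0$; combining this with local smoothness on the compact core $\overline{B_{R_0}}$ gives $u_j\in L^\infty(\rtwo)$, whence $W\le\sum_j|u_j|^2$ is bounded above.

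With $W$ nonnegative, bounded above, and subharmonic on $\rtwo$, the classical two-dimensional Liouville theorem for subharmonic functions forces $W$ to be constant. Substituting $\Delta W=0$ back into $-\Delta W+2\la_n^{-1}n\,W\le 0$ gives $W\le 0$, so $W\equiv 0$ and thus $f\le n$ on $\rtwo$. It then follows that $|u_j|\le\sqrt n$ everywhere and the nonlinearity $u_j(n-f)/\la_j$ is globally bounded by $n^{3/2}/\la_j$. Applying interior $C^{1,\al}$ Schauder estimates on unit balls centered at arbitrary points $x_0\in\rtwo$ yields $\|u_j\|_{C^{1,\al}(B_{1/2}(x_0))}\le C$ uniformly in $x_0$, whence $\nabla u_j\in L^\infty(\rtwo)$.

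The main obstacle is the uniform $L^\infty$ bound on the $u_j$ starting from only a global $L^2$-control of the gradients. In the single-component setting of \cite{BMR94} the finite-potential hypothesis directly yields $(|u|^2-1)_+\in L^2$, which makes the standard cutoff argument immediate; here the corresponding integrability is not at hand, and the boundedness of $W$ must instead be obtained by combining interior elliptic bootstrap on a compact core with an $\ve$-regularity argument in the exterior that exploits the decay of the Dirichlet energy at infinity.
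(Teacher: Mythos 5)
There is a genuine gap in the step that makes your Liouville argument run: the uniform bound on $W=(f-n)_+$. You assert that because $\sum_j\int_{\rtwo}|\nabla u_j|^2<\infty$, the Dirichlet energy on unit balls decays at infinity and an ``$\ve$-regularity argument'' then yields $\|u_j\|_{L^\infty(\rtwo\setminus B_{R_0})}\le C$. But small local Dirichlet energy controls only the \emph{oscillation} of $u_j$ on a ball, not its value: a function that is nearly equal to a huge constant on $B_1(x_0)$ has arbitrarily small local energy, and indeed $\nabla u_j\in L^2(\rtwo)$ is compatible with $|u_j(x)|\to\infty$ (e.g. growth like $(\log|x|)^{1/4}$ has finite Dirichlet integral in $\rtwo$). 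The $\ve$-regularity lemmas you have in mind (harmonic-map type) presuppose a bounded target; here nothing a priori confines $u_j$, so to exclude a large near-constant plateau you would have to bring in the equation quantitatively — for instance via the Keller--Osserman a priori bound applied to $\Delta f\ge 2\la_n^{-1}f(f-n)$ on $\{f>n\}$, which is a different argument from the one you wrote. As stated, the boundedness of $W$ is not established, and without it the two-dimensional Liouville theorem for subharmonic functions cannot be invoked.

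The paper avoids this issue entirely by working with $\vp=\sqrt f-\sqrt n$ rather than $(f-n)_+$. The point of the square root is that, by Cauchy--Schwarz,
\begin{equation*}
|\nabla\vp^+|=\frac{1}{\sqrt f}\Big|\sum_{j=1}^n\re\big[\bar u_j\nabla u_j\big]\Big|\le\Big(\sum_{j=1}^n|\nabla u_j|^2\Big)^{1/2}\in L^2(\rtwo),
\end{equation*}
so the hypothesis \eqref{eq:pot1} directly controls $\nabla\vp^+$ globally. Pairing the Kato inequality $\Delta\vp^+\ge\tfrac{2n}{\la_n}\vp^+$ with the cutoff $\xi_k$ and integrating by parts then gives $\int\xi_k\vp^+\le C\big(\int_{k<|x|<2k}|\nabla\vp^+|^2\big)^{1/2}\to0$, forcing $\vp^+\equiv0$ with no boundedness of $u_j$ or Liouville theorem needed. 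Your derivation of the differential inequality for $W$ and your final step (once $f\le n$, the nonlinearity is bounded by $n^{3/2}/\la_j$ and interior $W^{2,p}$/Schauder estimates on unit balls give $\nabla u_j\in L^\infty$) agree with the paper; it is only the intermediate boundedness claim that does not hold up as written.
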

\begin{proof}
Let $\vp=\sqrt{f}-\sqrt{n} $.
By using \eqref{eq:vp0}, we derive
\begin{align*}
\Delta \vp&= \vp h f^{- \frac12}  (\sqrt{f}+\sqrt{n})
 -\frac{1}{4} f^{-\frac32} \Big(  |\nabla f|^2 -4  f  \sum_{j=1}^n   |\nabla u_j|^2  \Big)\\
 &\ge \vp h f^{- \frac12}  (\sqrt{f}+\sqrt{n})  ,
\end{align*}
where the inequality comes from the Cauchy-Schwartz inequality:
\[  |\nabla f|^2 = 4 \Big(  \sum_{j=1}^n \re \big[ \bar{u}_j \nabla u_j \big]  \Big)^2 \le 4  f  \sum_{j=1}^n   |\nabla u_j|^2.
\]
 Hence, by Kato's inequality and \eqref{eq:oder lambda-j}
\begin{equation}
\label{eq:Kato}
 \Delta \vp^+ \ge  \chi_{ \{\vp>0\} }\Delta \vp \ge  \frac{1}{\la_n} \vp^+\sqrt{f}(\sqrt{f}+\sqrt{n}) \ge  \frac{2n}{\la_n} \vp^+.
\end{equation}
Again by the  Cauchy-Schwartz inequality and \eqref{eq:pot1},
\[ |\nabla \vp^+ | = \frac{1}{\sqrt{f}}  \Big| \sum_{j=1}^n \re \big[ \bar{u}_j \nabla u_j \big] \Big| \le \Big( \sum_{j=1}^n   | \nabla u_j |^2 \Big)^{\frac12} ~\in ~ L^2(\rtwo).
\]
So, multiplying \eqref{eq:Kato} by $\xi_k$, we can see that
\begin{align*}
 \frac{2n}{\la_n} \int_\rtwo \xi_k  \vp^+  & \le -\int_\rtwo \nabla \vp^+ \cdot \nabla \xi_k \le \frac{C}{k} \int_{\{ k<|x|<2k\} } |\nabla \vp^+| \\
 &\le C \Big( \int_{\{ k<|x|<2k\} } |\nabla \vp^+|^2 \Big)^{\frac12} \to 0
\end{align*}
as $k\to \infty$.
Hence, $\vp^+\equiv 0$.

Now, we note that $|\Delta u_j|\le \la_j^{-1}  |u_j| (n-f)\le  \la_j^{-1} n\sqrt{n}$.
Thus, for  each $x \in \rtwo$, the function $v_{j,x}(y)=u_j(x+y)$ for $y\in B_1(0)$ is uniformly bounded in $W^{2,p}(B_1)$ for all $p>1$.
Hence, $\nabla v_{j,x}$ is uniformly bounded in $C^1(B_{1/2})$ and thus  $\nabla u_j \in L^\infty(\rtwo)$.
\end{proof}

{\bf Proof of Theorem \ref{thm:main1}.}
By multiplying the second equation of  \eqref{eq:vp0} by  $\xi_k$, we obtain
\begin{equation}\label{eq:3.1}
2 \int_\rtwo \xi_k h (n-f) =   \int_\rtwo \nabla \xi_k \cdot  \nabla f  + 2 \sum_{j=1}^n \int_\rtwo \xi_k  |\nabla u_j|^2 .
\end{equation}
Since $u_j \in L^\infty (\rtwo)$  and $\nabla u_j \in L^\infty(\rtwo)$, it follows from  \eqref{eq:pot1} that
\[ \Big| \int_\rtwo \nabla \xi_k \cdot  \nabla f   \Big| \le C  \sum_{j=1}^n  \Big( \int_{k \le |x| \le 2k}  | \nabla u_j|^2 \Big)^{\frac12} \to 0
\]
as $k\to \infty$.
Thus, letting $k\to \infty$, we are led to
\begin{equation}\label{eq:3.2}
\frac{1}{\la_n}  \int_\rtwo f (n-f)  \le  \int_\rtwo h (n-f) = \sum_{j=1}^n \int_\rtwo |\nabla u_j|^2 .
\end{equation}
Since $\nabla u_j \in L^\infty(\rtwo)$, this implies that the set $\{ x:  1\le f (x)\le 3/2<n \}$ is bounded.
Thus, we can conclude from \eqref{eq:3.2} that either
$f \in L^1(\rtwo)$ or $(n-f) \in L^1(\rtwo)$.

Meanwhile, by multiplying \eqref{eq:SGL-rtwo} by $\xi_k x\cdot \nabla u_i$,  we obtain
\begin{align*}
(LHS)&=- \sum_{i=1}^n \int_\rtwo \la_i \xi_k  \Delta u_i(x\cdot \nabla u_i) \\
& = \sum_{i=1}^n \int_\rtwo \xi_k u_i (f-n)(x\cdot \nabla u_i)=(RHS).
\end{align*}
We have
\begin{align*}
(LHS)&=  \sum_{i=1}^n  \la_j \int_\rtwo (\nabla \xi_k \cdot \nabla u_i)(x\cdot \nabla u_i) -  \sum_{i=1}^n \frac{ \la_i}{2} \int_\rtwo |\nabla u_i|^2 (x\cdot \nabla \xi_k).
\end{align*}
So,
\begin{align*}
 |(LHS)|\le C  \sum_{i=1}^n \int_{\{ k<|x|<2k\} } |\nabla u_i|^2 \to  0 \qas k\to \infty.
\end{align*}

Now, suppose that $f \in L^1(\rtwo)$.
Then, since $f \le n$ and $f \in L^1(\rtwo)$,
\begin{align*}
(RHS)&=  \frac12  \int_\rtwo  \xi_k ( x \cdot \nabla f ) (f-n) \\
& =  \int_\rtwo \xi_k f \Big( n -\frac{f}{2} \Big) + \frac12 \int_\rtwo     f \Big( n -\frac{f}{2} \Big) (x\cdot \nabla \xi_k)\\
&\ge  \frac{n}{2}   \int_\rtwo \xi_k f  +o(1)
\end{align*}
as $k\to \infty$.
This implies that $f \equiv 0$ and thus $u_i \equiv 0$ for each $i$. 
On the other hand, if $(n-f) \in L^1(\rtwo)$, then
\begin{align*}
(RHS)&=  \frac14  \int_\rtwo  \xi_k   x \cdot \nabla  (f-n)^2 \\
& =-  \frac12 \int_\rtwo \xi_k (f-n)^2  - \frac14 \int_\rtwo (x\cdot \nabla \xi_k)    ( f-n)^2 \\
&=-  \frac12 \int_\rtwo \xi_k (f-n)^2 +o(1)
\end{align*}
as $k\to \infty$.
So, we have  $f \equiv n$, which implies by \eqref{eq:3.1}  that $\nabla u_j\equiv 0$ for each $j$.
This gives us the desired conclusion of Theorem \ref{thm:main1}.
\qed \\

{\bf Proof of Theorem \ref{thm:main-lambda}.}
Let us assume that $\la_1=\cdots=\la_n =\la$.
For instance, suppose that $d_1>d_2$.
Then, we have from \eqref{eq:SGL-rtwo-radial}
  \begin{align*}
& f_1''+\frac{1}{r} f_1' =  \frac{d_1^2}{r^2} f_1+ \frac{1}{\la } f_1 \Big( \sum_{j=1}^n f_j^2-n \Big), \\
&  f_2''+\frac{1}{r} f_2' =  \frac{d_2^2}{r^2} f_2+ \frac{1}{\la } f_2 \Big(  \sum_{j=1}^n f_j^2-n \Big).
\end{align*}
Choose $r_0>0$ such that $f_1 \ge \al_1 /2$ and $f_2 \ge \al_2 /2$ for $r\ge r_0$.
So,  for all $r \ge r_0$
\[ \frac{1}{r} \big[ r(f_1' f_2 - f_1 f_2') \big]' =  \frac{d_1^2 - d_2^2}{r^2} f_1 f_2 \ge  \frac{\al_1 \al_2(d_1^2 - d_2^2)}{4} \cdot \frac{1}{r^2}  \equiv \frac{c_0}{r^2} .
\]
Integrating this inequality on $(r_0,r)$, we are led to
\[ f_1' f_2 - f_1 f_2' \ge \frac{c_0}{r} \ln r + \frac{c_1}{r}
\]
for some $c_1 \in \rone$.
Set $h=f_1/f_2$.
Then, $h(r) \to \al_1/\al_2$ as $r\to \infty$ and
\[ h' = \frac{f_1' f_2 - f_1 f_2' }{f_2^2} \ge \frac{1}{\al_2^2} (f_1' f_2 - f_1 f_2' )\ge \frac{c_0}{\al_2^2r} \ln r + \frac{c_1}{\al_2^2r} \qfor r\ge r_0.
\]
This leads us to a contradiction: as $r\to \infty$,
\[ h(r) \ge \frac{c_0}{2\al_2^2 }  \big[ (\ln r)^2 - (\ln r_0)^2 \big] + \frac{c_1}{\al_2^2 } \ln \frac{r}{r_0} + h(r_0) ~~\to ~~\infty.
\]
Consequently, $d_1=d_2$.
\qed \\

\section{Proof of Theorem \ref{thm:main2}}
\setcounter{equation}{0}

In this section, we prove Theorem \ref{thm:main2}.
The key part is to classify  the values of potentials $I_{\al_i}(u_i)$   into four cases.
Throughout this section, let  $(u_1,\cdots,u_n)$ be a solution of  \eqref{eq:SGL-rtwo} satisfying
\begin{equation}\label{eq:pot_n}
E=E(u_1,\cdots,u_n)=\int_\rtwo \Big(n-\sum_{j=1}^n|u_j|^2\Big)^2<\infty.
\end{equation}
We begin with the following lemma.

\begin{lemma}\label{lem:infty}
Let $(u_1,\cdots,u_n)$ be  any solution   of  \eqref{eq:SGL-rtwo} satisfying \eqref{eq:pot_n}. 
Then,  the followings hold.
\begin{itemize}
\item[{\rm (i)}]
We have
\begin{equation}\label{eq:behavior u_j}
 \sum_{j=1}^n|u_j|^2 \le n \qand \sum_{j=1}^n|u_j|^2 \to n \qas    |x|\to\infty.
\end{equation}
\item[{\rm (ii)}]
We have
\begin{equation} \label{eq:gradient L-infty}
 \nabla u_j\in L^\infty(\rtwo)~ \mbox{ for each } ~j.
\end{equation}
\item[{\rm (iii)}]
There exists a constant $C$ independent of $R$ such that
\begin{align}
 \label{eq:int}
 \sum_{j=1}^n \int_{B_R} |\nabla u_j|^2 dx \le CR.
\end{align}
\end{itemize}
\end{lemma}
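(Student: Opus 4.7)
The plan is to prove the three parts in the natural dependency order: the pointwise bound $f\le n$ from part (i) is proved first; this permits the uniform gradient bound (ii), which in turn yields the asymptotic assertion of (i) via the resulting Lipschitz continuity of $f$; finally (iii) follows from a short testing of \eqref{eq:SGL-rtwo} against $\bar u_i$. Throughout I write $f=\sum_{j=1}^n|u_j|^2$ as in \eqref{eq:f h def}.

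For the pointwise bound, I would adapt the proof of Lemma \ref{lem:infty-nabla}, but apply Kato's inequality to $(f-n)^+$ directly rather than to $(\sqrt{f}-\sqrt{n})^+$; the reason is that here I have $(n-f)\in L^2(\rtwo)$ from \eqref{eq:pot_n}, but no longer $\nabla u_j\in L^2(\rtwo)$. The second identity in \eqref{eq:vp0} gives $\Delta(f-n)=2h(f-n)+2\sum_j|\nabla u_j|^2$; dropping the nonnegative gradient term, Kato's inequality yields
\[
\Delta(f-n)^+ \ge \chi_{\{f>n\}}\cdot 2h(f-n) \ge \frac{2n}{\la_n}(f-n)^+ \qin \rtwo.
\]
Multiplying by $\xi_k$, integrating by parts, and applying Cauchy--Schwarz,
\[
\frac{2n}{\la_n}\int_\rtwo\xi_k(f-n)^+ \le \int_\rtwo(f-n)^+\Delta\xi_k \le \|(f-n)^+\|_{L^2(\rtwo)}\|\Delta\xi_k\|_{L^2(\rtwo)} \le \frac{C}{k},
\]
since $(f-n)^+\in L^2(\rtwo)$ by \eqref{eq:pot_n} and $\|\Delta\xi_k\|_{L^2}\le C/k$ by scaling. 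Passing $k\to\infty$ and invoking Fatou's lemma yield $(f-n)^+\equiv 0$, i.e.\ $f\le n$.

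With $f\le n$ at hand, $|u_j|\le\sqrt n$ and $|\Delta u_j|\le \la_j^{-1}n^{3/2}$, so standard interior $W^{2,p}$ estimates applied uniformly to translates of $u_j$ on unit balls deliver $\nabla u_j\in L^\infty(\rtwo)$, proving (ii). Consequently $\nabla f = 2\sum_j\re[\bar u_j\nabla u_j]$ is bounded and $f$ is uniformly Lipschitz on $\rtwo$. If $f$ did not tend to $n$ at infinity, there would exist $\delta>0$ and $x_k\to\infty$ with $n-f(x_k)\ge\delta$; Lipschitz continuity would force $n-f\ge\delta/2$ on $B_{r_0}(x_k)$ for some fixed $r_0>0$, and passing to a subsequence with the balls pairwise disjoint gives $\sum_k\int_{B_{r_0}(x_k)}(n-f)^2=\infty$, contradicting \eqref{eq:pot_n}. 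This finishes (i).

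For (iii), I would multiply \eqref{eq:SGL-rtwo} by $\bar u_i$, take real parts, and integrate on $B_R$ to obtain after the divergence theorem
\[
\la_i\int_{B_R}|\nabla u_i|^2 = \int_{B_R}|u_i|^2(n-f) + \la_i\int_{\pa B_R}\re[\bar u_i\pa_\nu u_i].
\]
Summing in $i$, the interior term is $\int_{B_R}f(n-f)\le n|B_R|^{1/2}\|n-f\|_{L^2(\rtwo)}\le CR$ (via Cauchy--Schwarz and \eqref{eq:pot_n}), while the boundary term is controlled by $C|\pa B_R|=CR$ using (ii); dividing by $\la_1$ then yields (iii). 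The main obstacle, and the genuinely new ingredient compared to Lemma \ref{lem:infty-nabla}, is the first step: the earlier proof relied crucially on $\nabla u_j\in L^2$ to handle the gradient contribution to $\Delta(\sqrt{f}-\sqrt{n})$, which is not available here. The observation that rescues the argument is that \eqref{eq:pot_n} supplies $(f-n)^+\in L^2(\rtwo)$ directly, and this is exactly the integrability needed to close the Cauchy--Schwarz cutoff estimate above.
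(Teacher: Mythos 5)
Your proposal is correct and takes essentially the same route as the paper: pointwise bound $f\le n$ via a cutoff test of the differential inequality for $(f-n)^+$, gradient bound by interior $W^{2,p}$ estimates on unit balls, decay of $n-f$ from Lipschitz continuity plus $(n-f)\in L^2$, and (iii) by testing \eqref{eq:SGL-rtwo} against $\bar u_i$ on $B_R$. The only cosmetic differences are that you test against $\xi_k$ after Kato's inequality (as in Lemma \ref{lem:infty-nabla}) where the paper tests against $\xi_k\vp^+$, and that you make explicit the logical ordering (first $f\le n$, then (ii), then the decay in (i)), which the paper leaves implicit even though its proof of the decay already uses $M=\max_j\|\nabla u_j\|_{L^\infty}$.
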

\begin{proof}
(i)
Let $f=|u_1|^2+\cdots+|u_n|^2$ and $ \vp=f-n $.
By    \eqref{eq:vp0},
\[  \Delta \vp \ge  2 h \vp .
\]
Multiplying this equation by $\xi_k\vp^+$, we are led to
\[\begin{aligned}
& \int_\rtwo \big(|\nabla\vp^+|^2    +2h  |\vp^+|^2\big) \xi_k  dx
\le    \frac{1}{2}\int_\rtwo\Delta\xi_k|\vp^+|^2 dx\le \frac{C}{k^2}\int_\rtwo|\vp^+|^2 dx.
\end{aligned}
\]
Letting $k\to\infty$, we see by \eqref{eq:pot_n} that $\vp^+\equiv 0  $.
This implies the first part of \eqref{eq:behavior u_j}.

Next, to show the second part of \eqref{eq:behavior u_j}, we argue by a contradiction.
Assume that there exists a sequence $|x_k|\to\infty$ such that $f(x_k)\le n-2\delta$ for some $0<\delta<2/5$.
Let  $r= \min\big\{1, \delta/(2Mn\sqrt{n}+nM^2)\big\}$, where we denote $M=\max_j \|\nabla u_j\|_{L^\infty}$ in the rest of this paper.
For $x\in B (x_k ,r)$, we have by the Mean Value Theorem
\[\begin{aligned}
 f(x) & \le \sum_{j=1}^n \big( |u_j(x_k)|+|u_j(x)-u_j(x_k)|\big)^2  \\
&\le   \sum_{j=1}^n |u_j (x_k)|^2+2M|x_k-x| \sum_{j=1}^n |u_j(x_k)|+nM^2|x_k-x|^2\\
&<  n-2\delta +(2Mn\sqrt{n}+nM^2)r \le n- \delta.
\end{aligned}
\]
Then, it follows that
\[\int_{B (x_k ,r)}  (f-n)^2 dx\ge \pi r^2 \delta^2.
\]
This is a contradiction since  $f-n \in L^2(\rtwo)$  by \eqref{eq:pot_n}. 

(ii)
This follows from Lemma \ref{lem:infty-nabla}.

(iii)
By multiplying   \eqref{eq:SGL-rtwo} by  $u_i$ for each $i$  and
  integrating them over $B_R$, we obtain
\begin{align*}
  \sum_{i=1}^n \int_{B_R} \la_i |\nabla u_i|^2 dx
&=   \sum_{i=1}^n \int_{\partial B_R} \la_i   u_i \frac{\partial u_i}{\partial \nu} dS+\int_{B_R}f (n-f) dx \\
& \le   n \sqrt{n} \la_n  M |\partial B_R| + n E^{\frac12} | B_R|^{\frac12} \le CR.
\end{align*} 
Here $\nu$ denotes the outward normal to $B_R$.
\end{proof}

We note that although the statement  $ |u_1|^2+\cdots+|u_n|^2 \le n $   in Lemma \ref{lem:infty-nabla} and in \eqref{eq:behavior u_j} is the same, it is proved under different conditions.
The following lemma helps us classify the values of  $I_{\al_j}(u_j)$.

\begin{lemma}
\label{eq:I_j infinite number}
Let $n\ge 3$ and $(u_1,\cdots,u_n)$ be a solution of  \eqref{eq:SGL-rtwo}  satisfying \eqref{eq:pot_n}.
For $1\le j \le n$, let  $\al_j \in \big[0,\sqrt{n}\,\big]$ be given such that $\al_1^2+\cdots+\al_n^2=n$.
If  $k \in \{ 0,1,\cdots,n\}$ is the number of $\al_j$ such that $I_{\al_j}(u_j)=\infty$, then   $k\ne 1$.
Moreover, if $I_{\al_j}(u_j)<\infty$, then
\begin{equation}\label{eq: u_j alpha_j}
 |u_j|  \to \al_j  \quad \text{uniformly} \quad  \text{as}\quad   |x|\to \infty . 
\end{equation}
\end{lemma}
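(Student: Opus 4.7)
My plan is to exploit one clean algebraic rewriting that converts every finiteness hypothesis into an $L^2$ statement. Set $v_j := \al_j^2 - |u_j|^2$ for $j=1,\ldots,n$. Because $\al_1^2 + \cdots + \al_n^2 = n$, summing yields
\begin{equation*}
\sum_{j=1}^n v_j \;=\; n - \sum_{j=1}^n |u_j|^2 .
\end{equation*}
Thus $I_{\al_j}(u_j)<\infty$ is equivalent to $v_j\in L^2(\rtwo)$, and the standing assumption \eqref{eq:pot_n} that $E(u_1,\ldots,u_n)<\infty$ is equivalent to $\sum_{j=1}^n v_j\in L^2(\rtwo)$.

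For the first claim $k\neq 1$, I would argue by contradiction. Suppose exactly one $v_j$ failed to belong to $L^2(\rtwo)$; after relabeling say $v_n\notin L^2(\rtwo)$ while $v_1,\ldots,v_{n-1}\in L^2(\rtwo)$. Writing
\begin{equation*}
v_n \;=\; \Big(\sum_{j=1}^n v_j\Big) \;-\; \sum_{i=1}^{n-1} v_i
\end{equation*}
expresses $v_n$ as a finite sum of elements of $L^2(\rtwo)$, forcing $v_n\in L^2(\rtwo)$ and giving the contradiction. So the number of infinite $I_{\al_j}(u_j)$ cannot equal $1$.

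For the uniform convergence $|u_j|\to \al_j$ under the hypothesis $I_{\al_j}(u_j)<\infty$, I would combine $v_j\in L^2(\rtwo)$ with global Lipschitz control. From Lemma \ref{lem:infty}(i) one has $|u_j|\le\sqrt n$ and from Lemma \ref{lem:infty}(ii) one has $\|\nabla u_j\|_{L^\infty}\le M$; since $\nabla v_j = -2\,\re(\bar u_j\nabla u_j)$, this gives $|\nabla v_j|\le 2\sqrt n\,M$ on $\rtwo$. I would then repeat the ball-packing argument already used in the proof of Lemma \ref{lem:infty}(i): if $|v_j(x_k)|\ge 2\delta$ along some sequence $|x_k|\to\infty$, the Mean Value Theorem gives $|v_j|\ge\delta$ on each $B(x_k,r)$ with $r$ depending only on $\delta$ and $M$, so extracting a disjoint subsequence of balls produces an infinite contribution to $\int_{\rtwo}|v_j|^2$, contradicting $v_j\in L^2(\rtwo)$. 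Hence $v_j\to 0$ uniformly at infinity, i.e.\ $|u_j|^2\to\al_j^2$ uniformly; since $|u_j|$ and $\al_j$ both lie in $[0,\sqrt n]$ and $t\mapsto\sqrt t$ is uniformly continuous there, taking square roots preserves uniform convergence.

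I do not expect a genuine obstacle in this proof: the entire argument is a piece of bookkeeping plus a standard uniform-continuity decay lemma. The only step that requires any thought is the initial choice of the substitution $v_j=\al_j^2-|u_j|^2$, which aligns all three finiteness conditions with membership in a single vector space and makes both conclusions follow from, respectively, the closure of $L^2(\rtwo)$ under finite sums and the decay of Lipschitz $L^2$-functions on $\rtwo$.
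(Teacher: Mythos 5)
Your proof is correct and is essentially the paper's argument in cleaner packaging: the paper proves $k\neq 1$ by bounding $Y_R=\|\al_n^2-|u_n|^2\|_{L^2(B_R)}$ via the expansion $E\ge (X_R-Y_R)^2$, which is exactly your observation that $v_n=\big(\sum_j v_j\big)-\sum_{i<n}v_i$ is a finite sum of $L^2(\rtwo)$ functions, and for \eqref{eq: u_j alpha_j} the paper likewise just invokes the same Mean Value Theorem ball-packing argument from Lemma \ref{lem:infty}(i) that you spell out. No gaps.
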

\begin{proof}
Suppose that $I_{\al_j}(u_j)<\infty$ for $1\le j\le n-1$.
Let
\begin{align*}
X_R &= \Big[ \int_{B_R}  \Big( \sum_{j=1}^{n-1} \al_j^2  - \sum_{j=1}^{n-1}|u_j|^2\Big)^2dx \Big]^{\frac12},\\
  Y_R& =  \Big[\int_{B_R}  ( \al_n^2 - |u_n|^2 )^2dx \Big]^{\frac12} .
\end{align*}
Since $I_{\al_j}(u_j)<\infty$ for $1\le j\le n-1$,  it follows that  $X_R\le C$ where $C$ is independent of $R$.
By \eqref{eq:pot_n}  and  the  Cauchy-Schwartz inequality,
\[\begin{aligned}
\infty>E&=\int_\rtwo \Big(n-\sum_{j=1}^n|u_j|^2\Big)^2 dx \\
& \ge X_R^2+Y_R^2+2\int_{B_R}  \Big( \sum_{j=1}^{n-1} \al_j^2  - \sum_{j=1}^{n-1}|u_j|^2\Big) ( \al_n^2 - |u_n|^2 ) dx \\
& \ge  (X_R - Y_R )^2.
\end{aligned}
\]
As a consequence,   $Y_R$ is also uniformly bounded as $R\to \infty$ and thus $I_{\al_n}(u_n)<\infty$.
This   implies that  either $k=0$ or $2\le k\le n $.
The behavior \eqref{eq: u_j alpha_j} follows from the same argument for the proof of (i) in  Lemma \ref{lem:infty}.
\end{proof}

By   Lemma \ref{eq:I_j infinite number}, if $(u_1,\cdots,u_n)$ is a solution of  \eqref{eq:SGL-rtwo} satisfying  \eqref{eq:pot_n}, then  one of the following four cases holds:
\begin{itemize}
\item[{\rm\bf (P1)}]
 there exist  $\alpha_j  \in [0,\sqrt{n}\,]$ for $1\le j\le n$ such that
\[ \al_j>0 ~~\forall j, \quad \sum_{j=1}^n \al_j^2  =n, \qand I_{\al_j}(u_j)<\infty  ~~\forall j ;
\]
\item[{\rm\bf (P2)}]
there exist $l\in \{1,\cdots,n-1\}$ and $\alpha_j  \in [0,\sqrt{n}\,]$ for $1\le j\le n$ such that
\[ \left\{
\begin{aligned}
& \al_j>0 ~\text{ for }~ 1\le j \le l, \quad   \al_j=0 ~\text{ for }~ l+1\le j\le n,\\
& \sum_{j=1}^n \al_j^2  =n, \qand I_{\al_j}(u_j)<\infty  ~\forall j ;
\end{aligned}
\right.\]
\item[{\rm\bf (P3)}]
there exist  $l\in \{1,\cdots,n-2\}$ and $\alpha_j  \in[0,\sqrt{n}\,]$ for $1\le j\le n$  such that
\[ \left\{
\begin{aligned}
& I_{\al_j}(u_j)<\infty ~\text{ for }~ 1\le j \le l, \quad I_{\al_j}(u_j)=\infty ~\text{ for }~ l+1\le j\le n,\\
& \sum_{j=1}^n \al_j^2  =n;
\end{aligned}
\right.\]
\item[{\rm\bf (P4)}]
$I_{\al_j}(u_j)=\infty $ for any choice of  $\alpha_j \in[0,\sqrt{n}\,]$ satisfying $\al_1^2+\cdots+\al_n^2=n$.\\
\end{itemize}

\begin{remark}\label{rmk:P-classify}$\;$
\begin{itemize}
\item[{\rm\bf (a)}]
 In view of Lemma \ref{eq:I_j infinite number}, $l\ne n-1$ in the case {\rm\bf (P3)}.
 In particular, {\rm\bf (P3)} does not happen for $n=2$.

\item[{\rm\bf (b)}]
  Let us explain the above classification for the case $n=3$.
Suppose that $\al_1,\al_2,\al_3 \in [0,\sqrt{3}\,]$ and $\al_1^2+\al_2^2+\al_3^2=3$.
Assume that $I_{\al_1}(u_1)<\infty$.
By the same argument of Lemma \ref{eq:I_j infinite number}, it is obvious that
\[I_{\al_1}(u_1)<\infty \quad\mbox{if and only if}\quad J=\int_\rtwo \big(\al_2^2+\al_3^2-|u_2|^2-|u_3|^2\big)^2<\infty.
\]
When $J<\infty$, we have that $I_{\al_2}(u_2)<\infty$ if and only if $I_{\al_3}(u_3)<\infty$.
So, if $ I_{\al_1}(u_1)<\infty$, then either
\[\text{(i) } I_{\al_1}(u_1)<\infty, ~I_{\al_2}(u_2)<\infty, ~I_{\al_3}(u_3)<\infty,
\]
or
\[\text{(ii) } I_{\al_1}(u_1)<\infty, ~I_{\al_2}(u_2)=\infty, ~I_{\al_3}(u_3)=\infty.
\]
If (i) is true and $\al_j>0$ for all $j=1,2,3$, then it results in the case {\bf (P1)}.
If (i) is true and either $\al_1,\al_2>0$ with $ \al_3=0 $ or $\al_1=\sqrt{3}$ with $\al_2=\al_3=0$, then we are led to the case {\bf (P2)}.
If (ii) is true, then we have {\bf (P3)}.
Finally, if $I_{\al_1}(u_1)=I_{\al_2}(u_2)=I_{\al_3}(u_3)=\infty$ for any choice of $\al_j$ with $\al_1^2+\al_2^2+\al_3^2=3$, then we get the case {\bf (P4)}.

\item[{\rm\bf (c)}]
We believe that  no solutions of  \eqref{eq:SGL-rtwo} satisfy both  $E( u_1,\cdots,u_n)<\infty$  and {\bf (P3)} (or {\bf (P4)}).
Of course, one can find an example of $(u_1,\cdots,u_n)$ that violates one of three conditions: (i)   $(u_1,\cdots,u_n)$ is  a solution, (ii) $E( u_1,\cdots,u_n)<\infty$,   and (iii) {\bf (P3)}  or {\bf (P4)} is true.

For instance,  if $n\ge 2$ and we set
\begin{align*}
u_1(x,y)&=\sqrt{n}\sin (x^2+y^2),\\
 u_j(x,y)& =\sqrt{n}\cos^{j-1} (x^2+y^2) \sin  (x^2+y^2) \qfor 2\le j \le n-1,\\
 u_n(x,y)& =\sqrt{n}\cos^{n-1} (x^2+y^2),
\end{align*}
then $(u_1,\cdots,u_n)$ satisfies $E( u_1,\cdots,u_n)<\infty$  and {\bf (P4)}.
However, $(u_1,\cdots,u_n)$ is not a solution of  \eqref{eq:SGL-rtwo}.

Another example is   $u_j(x ,y) = A_j e^{i\om x }$ where $A_j$ and $\om $  are positive real numbers for $1\le j \le n$ such that $A_1^2+\cdots+A_n^2+\om^2=n$.
Then, one may check that  $(u_1,\cdots,u_n) $ is a solution of \eqref{eq:SGL-rtwo}.
Moreover, if $\al_j=A_j$ for $1\le j\le n-1$ and  $\al_n=\sqrt{A_n^2+\om^2}$,  then $I_{\al_j}(u_j)=0$ for $1\le j\le n-1$  and $I_{\al_n}(u_n)=\infty$.
Thus, $( u_1,\cdots,u_n)$ satisfies {\bf (P3)} with $l=n-1$ but we obtain $E( u_1,\cdots,u_n)=\infty$.
 \qed
\end{itemize}
\end{remark}

Based on the above classification and Remark \ref{rmk:P-classify}, we will focus on the quantization problem only for the cases  {\bf (P1)} and {\bf (P2)}.
Thus,    the condition \eqref{eq:I-alpha finite} in Theorem \ref{thm:main2} is quite a reasonable assumption.
Now, Theorem \ref{thm:main2} is a consequence of the following proposition.

\begin{proposition}\label{prop:quant(P1)_n}
Let $(u_1,\cdots,u_n)$ be a solution pair of \eqref{eq:SGL-rtwo}  satisfying \eqref{eq:pot_n}.
Suppose {\rm\bf (P1)} or {\rm\bf (P2)} is true.
That is, there exist $l\in \{1,\cdots,n \}$ and $\alpha_i  \in [0,\sqrt{n}\,]$ for $1\le i\le n$ such that
\[ \left\{
\begin{aligned}
& \al_i>0 ~\text{ for }~ 1\le i \le l, \\
&   \al_i=0 ~\text{ for }~ l+1\le i\le n \quad \mbox{if} \quad l<n,\\
&  \al_1^2  +\cdots+\al_n^2 =n, \\
&  I_{\al_i}(u_i)<\infty  ~\forall i.
\end{aligned}
\right.\]
Then, there exist $l$ nonnegative integers $d_1,\cdots,d_l$  such that
\begin{equation}
\label{eq:quant P3_n}
\int_\rtwo \Big(\sum_{j=1}^n|u_j|^2-n\Big)^2 dx =2\pi \sum_{i=1}^l \la_i \al_i^2 d_i^2.
\end{equation}
\end{proposition}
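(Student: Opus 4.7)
The plan is to adapt the Pohozaev-identity argument of Brezis--Merle--Rivi\`ere to the coupled system \eqref{eq:SGL-rtwo}. Multiplying the $i$-th equation $-\lambda_i\Delta u_i = u_i(n-f)$ by $x\cdot\nabla\bar u_i$, taking real parts, summing over $i$, and integrating by parts on $B_R$---using $\re(u_i\overline{x\cdot\nabla u_i}) = \tfrac12\,x\cdot\nabla|u_i|^2$ on the right-hand side and the decomposition $|\nabla u_i|^2 = |\partial_\nu u_i|^2 + |\partial_\tau u_i|^2$ on $S_R$---yields the Pohozaev identity
\[\sum_{i=1}^n\frac{\lambda_i R}{2}\int_{S_R}\bigl(|\partial_\tau u_i|^2 - |\partial_\nu u_i|^2\bigr)\,dS + \frac{R}{4}\int_{S_R}(n-f)^2\,dS = \frac12\int_{B_R}(n-f)^2\,dx.\]
Because $E<\infty$, the function $R\mapsto\int_{S_R}(n-f)^2\,dS$ is integrable on $(0,\infty)$, so a Chebyshev-type argument produces $R_k\to\infty$ with $R_k\int_{S_{R_k}}(n-f)^2\,dS\to 0$; hence the middle boundary term vanishes and the right-hand side tends to $E/2$ along this subsequence.

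To analyze the first boundary term, I would use Lemma~\ref{eq:I_j infinite number}, which under $I_{\al_i}(u_i)<\infty$ gives $|u_i|\to\al_i$ uniformly as $|x|\to\infty$; combining this with Lemma~\ref{lem:infty}(i) and standard elliptic bootstrapping on $-\lambda_i\Delta u_i = u_i(n-f)$ yields pointwise decay of $\nabla u_i$, and a further refinement of $R_k$ produces $R_k\int_{S_{R_k}}|\partial_\nu u_i|^2\,dS\to 0$ for each $i$. For $i\le l$, since $\al_i>0$, the component $u_i$ lifts on $S_{R_k}$ as $u_i = \rho_i e^{i\phi_i}$ with $\rho_i$ bounded away from zero, and the winding number $d_i\in\nat\cup\{0\}$ of $\phi_i$ is well-defined and $k$-independent by homotopy invariance; writing $\phi_i = d_i\theta + \psi_i$ with $\psi_i$ single-valued, the expansion
\[|\partial_\tau u_i|^2 = |\partial_\tau\rho_i|^2 + \rho_i^2\bigl(d_i/R_k + \partial_\tau\psi_i\bigr)^2\]
produces a leading piece $\tfrac{R_k}{2}\int_{S_{R_k}}\rho_i^2 d_i^2/R_k^2\,dS\to\pi\al_i^2 d_i^2$ (using $\rho_i\to\al_i$), a cross term integrating to zero after integration by parts against $\partial_\theta\rho_i^2\to 0$, and quadratic remainders controlled by the tangential gradient decay. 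For $l+1\le i\le n$, $\al_i=0$ and $|u_i|\to 0$ uniformly, so the same decay forces $R_k\int_{S_{R_k}}|\partial_\tau u_i|^2\,dS\to 0$. Passing to the limit in the Pohozaev identity then gives $\pi\sum_{i=1}^l\lambda_i\al_i^2 d_i^2 = E/2$, which is exactly \eqref{eq:quant P3_n}.

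The principal obstacle is extracting the right \emph{individual} decay rate of each $\nabla u_i$ on the spheres $S_{R_k}$, so that the Pohozaev boundary term cleanly separates into a normal piece vanishing in the limit and a tangential piece carrying the quantized topological contribution. Unlike the single-component setting of Theorem~A, the maximum principle here controls only the sum $f\le n$ (Lemma~\ref{lem:infty}(i)), and the nonlinearity $u_i(n-f)$ couples all components, so pointwise estimates on individual $|u_i|$ do not come for free. The hypothesis $I_{\al_i}(u_i)<\infty$ is precisely what provides $L^2$-decay of each $|u_i|^2-\al_i^2$ separately, and together with the equation and elliptic regularity upgrades to the uniform pointwise decay of $u_i\to\al_i$ and $\nabla u_i\to 0$ needed in the boundary analysis. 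This is why \eqref{eq:I-alpha finite} is essential and natural, and also why the quantization analysis is restricted to the cases {\bf (P1)} and {\bf (P2)}.
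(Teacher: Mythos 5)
Your overall architecture is sound and is essentially the Brezis--Merle--Rivi\`ere strategy the paper also follows, though in a slightly different packaging: you work with the Pohozaev identity on a single sphere $S_{R_k}$ and kill the boundary terms along a Chebyshev-selected sequence, whereas the paper integrates the identity over $r\in(0,R)$, divides by $\log R$, and extracts the quantized value from the logarithmic growth of $\int_{B_R}|\partial_\tau u_i|^2$. Both routes can work, and your bookkeeping of the tangential term (leading piece $\pi\lambda_i\al_i^2d_i^2$ from $\rho_i^2d_i^2/R_k^2$, cross term, quadratic remainder) matches the paper's expansion on the annulus $A_R$.

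The genuine gap is the step where you dispose of the boundary terms. You assert that elliptic bootstrapping on $-\lambda_i\Delta u_i=u_i(n-f)$ gives ``pointwise decay of $\nabla u_i$'' and that ``a further refinement of $R_k$'' then yields $R_k\int_{S_{R_k}}|\partial_\nu u_i|^2\,dS\to0$. Bootstrapping (or a blow-down compactness argument) gives at best $|\nabla u_i(x)|=o(1)$ with no rate, and $o(1)$ decay only bounds $R_k\int_{S_{R_k}}|\partial_\nu u_i|^2\,dS$ by $2\pi R_k^2\cdot o(1)$, which does not vanish. A Chebyshev selection works only if you first know $\int_{\rtwo\setminus B_{R_0}}|\partial_\nu u_i|^2\,dx<\infty$, and since $|\partial_\nu u_i|^2=|\partial_\nu\rho_i|^2+\rho_i^2|\partial_\nu\psi_i|^2$, this is exactly the content of the paper's key claims $\int_{\rtwo\setminus B_{R_0}}|\nabla\psi_i|^2<\infty$ and $\int_{\rtwo\setminus B_{R_0}}|\nabla\rho_i|^2<\infty$ (plus $\int_{\rtwo}|\nabla u_i|^2<\infty$ for $i>l$). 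These are the technical heart of the proof and do not follow from elliptic regularity: the $\psi_i$ bound is obtained from the divergence structure $\nabla\cdot(\rho_i^2\nabla\vp_i)=0$ via a differential inequality $\eta_0(R)\le\frac{R}{\nu}\eta_0'(R)+C_1$ with $\nu>1$, played against the linear growth bound $\sum_j\int_{B_R}|\nabla u_j|^2\le CR$ of Lemma \ref{lem:infty}(iii); the $\rho_i$ bound and the bound for the components with $\al_i=0$ each need separate test-function arguments exploiting $I_{\al_i}(u_i)<\infty$. The same missing estimates are also needed to control your cross term, since $\int_{S_R}\rho_i^2\,\partial_\tau\psi_i\,dS=\int_{S_R}(\rho_i^2-\al_i^2)\,\partial_\tau\psi_i\,dS$ must be shown to vanish along the chosen radii. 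Until these exterior $L^2$ gradient estimates are supplied, the proof is incomplete.
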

\begin{proof}
We follow the argument of \cite{BMR94} that are based on the Pohozaev identities.
Let us choose small $\delta \in (0,1)$ and a number $\mu>1$ such that
\begin{align*}
& 0< \delta < \min \Big\{ \frac{\al_i^2}{2} , \frac14 ~\Big|~1\le i\le l\Big\},\\
& \max \Big\{ \frac{\al_i^2}{\al_i^2-\delta} ~\Big|~1\le i\le l\Big\}  < 2(1-\delta),\\
&   \max \Big\{\frac{\al_i^2}{\al_i^2-\delta} ~\Big|~1\le i\le l  \Big\} <\mu < 2(1-\delta).
\end{align*}
By \eqref{eq: u_j alpha_j}, there exists $R_0>0$ such that for all $|x| \ge R_0$  and $1\le i\le l$, 
\begin{equation}\label{eq:behavior on inf two}
 \al_i^2-\delta < |u_i(x)|^2 < \mu (\al_i^2-\delta) .
\end{equation}
For $R>R_0$, the degrees
\[d_i=\deg (u_i,S_R) \qfor 1\le i\le l
\]
are well-defined.
We notice that given  a solution $(u_1,\cdots,u_n)$ of \eqref{eq:SGL-rtwo}, if we replace some $u_j$ by $\bar{u}_j$, then it gives also a solution of \eqref{eq:SGL-rtwo}.
For instance, if $n=2$ and $(u_1,u_2)$ is a solution, then
 $(\bar{u}_1,u_2)$, $(u_1,\bar{u}_2 )$, and $(\bar{u}_1 ,\bar{u}_2 )$ are also solutions.
So, without loss of generality, we may assume that $d_j\ge 0$ for $1\le j\le l$.
In addition, there exist   smooth real valued functions $\psi_j(x)$  on $\rtwo \setminus B_{R_0}$ such that
\begin{equation}\label{eq:express}
u_j(x) =|u_j(x)|e^{i(d_j\theta +\psi_j)} \equiv \rho_j(x)e^{i\vp_j} \qon \rtwo \setminus B_{R_0} \qfor 1\le j \le l.
\end{equation}
We note that $\psi_j$ is a globally defined smooth function whereas $\vp_j$ is locally defined.

We claim that
\begin{align}
\label{eq:nabla psi bdd}
 \sum_{i=1}^l  \int_{\rtwo\setminus   B_{R_0} } |\nabla \psi_i|^2  dx & <\infty,\\
\label{eq:nabla rho bdd}
  \sum_{i=1}^l  \int_{\rtwo\setminus   B_{R_0} } |\nabla\rho_i|^2dx &<\infty,\\
  \label{eq:remain}
 \sum_{i=l+1}^n  \int_{\rtwo  }|\nabla u_i|^2& < \infty.
\end{align}
We postpone the proof of these estimates to the end of this proof.

By  the Pohozaev identity for \eqref{eq:SGL-rtwo}, i.e., multiplying  $x\cdot \nabla u_i$  on \eqref{eq:SGL-rtwo}, we deduce that for   $r>0$,
\begin{equation}\label{eq:Poho}\begin{aligned}
&~\sum_{i=1}^n\int_{S_r} \la_i \Big|\frac{\partial u_i}{\partial\nu}\Big|^2+\frac{1}{r}\int_{B_r}\Big(n-\sum_{j=1}^n|u_j|^2\Big)^2\\
=&~\sum_{i=1}^n\int_{S_r} \la_i\Big|\frac{\partial u_i}{\partial\tau}\Big|^2+\frac{1}{2}\int_{S_r}\Big(n-\sum_{j=1}^n|u_j|^2\Big)^2.
\end{aligned}
\end{equation}
Set
\[ E(r)=\int_{B_r}\Big(\sum_{j=1}^n|u_j|^2-n\Big)^2  .
\]
Then, $E(r) \to E$ as $r\to \infty$.
By	 integrating  \eqref{eq:Poho} for $r\in(0,R)$, we see that
\begin{equation}\label{eq:Poho int}
 \sum_{i=1}^n\int_{B_R} \la_i\Big|\frac{\partial u_i}{\partial\nu}\Big|^2 +\int_0^R\frac{E(r)}{r}dr
= \sum_{i=1}^n\int_{B_R}\la_i \Big|\frac{\partial u_i}{\partial\tau}\Big|^2+\frac{1}{2}E(R).
\end{equation}
By \eqref{eq:nabla psi bdd} and \eqref{eq:nabla rho bdd}, it holds that
\[ \sum_{i=1}^l \int_{B_R} \la_i \Big|\frac{\partial u_i}{\partial\nu}\Big|^2 \le \sum_{i=1}^l \int_{B_R}\big(\la_i |\nabla\rho_i|^2+\la_i\rho_i^2|\nabla\vp_i|^2 \big)  \le C.
\]
 Moreover, it follows from \eqref{eq:remain} that
\[\sum_{i=l+1}^n \Big(\int_{B_R} \la_i \Big|\frac{\partial u_i}{\partial\nu}\Big|^2 +\int_{B_R} \la_i \Big|\frac{\partial u_i}{\partial\tau}\Big|^2\Big)\le C.
\] 
Hence, by dividing \eqref{eq:Poho int} by $\log R$ and letting $R\to \infty$, we are led to
\begin{equation}\label{eq:E final step}
E = \sum_{i=1}^l\lim_{R\to \infty} \frac{1}{\log R} \int_{B_R}  \la_i \Big|\frac{\partial u_i}{\partial\tau}\Big|^2 .
\end{equation}
For $R>R_0$,  by letting $A_R=B_R\backslash B_{R_0}$, we have
 \begin{align*}
&  \int_{A_R} \Big|\frac{\partial u_i}{\partial\tau}\Big|^2
=  \int_{A_R}  \Big|\frac{\partial \rho_i}{\partial\tau}\Big|^2 + \int_{A_R}  \rho_i^2 \Big| \frac{d_i}{r} + \frac{\partial \psi_i}{\partial \tau}  \Big|^2 \\
 =&   \int_{A_R} \Big\{   \Big|\frac{\partial \rho_i}{\partial\tau}\Big|^2 +\frac{d_i^2}{r^2}  (\rho_i^2-\al_i^2) + \rho_i^2 \Big| \frac{\partial \psi_i}{\partial \tau} \Big|^2 \Big\} +  \int_{A_R} \frac{2d_i\rho_i^2 }{r} \frac{\partial \psi_i}{\partial \tau}  + \int_{A_R}  \frac{\al_i^2 d_i^2}{r^2}.
\end{align*} 
The first integral is uniformly bounded with respect to $R$ by \eqref{eq:nabla psi bdd},   \eqref{eq:nabla rho bdd} and the conditions  $I_{\al_i}(u_i)<\infty$.
Moreover,
\[ \int_{A_R} \frac{\rho_i^2}{r}  \frac{\partial \psi_i}{\partial \tau} =  \int_{A_R} \frac{\rho_i^2 -\al_i^2}{r}  \frac{\partial \psi_i}{\partial \tau} \le \frac{1}{R_0} I_{\al_i}(u_i)^{\frac12} \Big( \int_{B_{R_0}^c} |\nabla \psi_i|^2 \Big)^{\frac12} \le C.
\] 
Consequently, we have
\begin{equation}
\label{eq:tang der on A_R}
 \int_{A_R} \Big|\frac{\partial u_i}{\partial\tau}\Big|^2 = 2\pi \al_i^2 d_i^2 \log R + O(1) \qas R \to \infty.
\end{equation}
Then, we obtain the desired result \eqref{eq:quant P3_n} from \eqref{eq:E final step} and \eqref{eq:tang der on A_R}.
It remains to prove    \eqref{eq:nabla psi bdd}, \eqref{eq:nabla rho bdd}, and \eqref{eq:remain}.\\

{\bf Proof of  \eqref{eq:nabla psi bdd}:}
Inserting \eqref{eq:express} in  \eqref{eq:SGL-rtwo}  and taking real and imaginary parts, we obtain that  for each $i$ and $|x| \ge R_0$,
\begin{align}
\label{eq:div}
\nabla \cdot \big(\rho_i^2\nabla\vp_i\big)&=\rho_i(\rho_i\Delta\vp_i+2\nabla\rho_i\cdot\nabla\vp_i)=0,\\
\label{eq:rho_j}
-\la_i \Delta\rho_i+\la_i \rho_i|\nabla\vp_i|^2&= \rho_i\Big(n-\sum_{j=1}^n\rho_j^2\Big).
\end{align}
Letting $x^\perp =(-x_2,x_1)$ for $x=(x_1,x_2)$, we can rewrite \eqref{eq:div} as
\begin{equation}
\label{eq:div-2}
\nabla \cdot \Big(\rho_i^2\Big(\frac{d_i}{r^2}x^\perp+\nabla\psi_i\Big)\Big)=0 \qfor |x| \ge R_0.
\end{equation}
Integrating \eqref{eq:div-2} on $\rtwo \setminus B_R$ for $R>R_0$, we are led to
\begin{equation}
\label{eq:psi-normal-int}
 0 = \int_{S_R}\rho_i^2\Big(\frac{d_i}{r^2}x^\perp+\nabla\psi_i\Big) \cdot \nu
 =  \int_{S_R}\rho_i^2\frac{\partial\psi_i}{\partial\nu}.
 \end{equation}
 Here, $\nu$ is the outward unit normal vector to $\pa B_R$.
 Let
\begin{align*}
\psi_i^R&=\frac{1}{2\pi R}\int_{S_R}\psi_i dx\qfor i=1,\cdots,l\\
\eta_0(R)&=  \sum_{i=1}^l \int_{A_R}  (\al_i^2-\delta) |\nabla\psi_i|^2 dx.
\end{align*} 
Multiplying \eqref{eq:div-2} by $(\psi_i-\psi_i^R)$ and integrating it over $A_R $ for each $i$,   we have  by \eqref{eq:behavior on inf two} and \eqref{eq:psi-normal-int}
 \begin{equation}\label{eq:f(R) ineq}
\begin{aligned}
 \eta_0(R) & \le \sum_{i=1}^l\int_{A_R} \rho_i^2|\nabla\psi_i|^2~+~ \sum_{i=1}^l\left\{ - \int_{S_{R_0}} \rho_i^2 \frac{\partial\psi_i}{\partial\nu} \psi_i~+ \right. \\
   &\quad\quad  \left. \int_{S_R} \rho_i^2\frac{\partial\psi_i}{\partial\nu}(\psi_i-\psi_i^R)
     +   \int_{A_R} \Big[  \frac{d_i}{r} (\al_i^2-\rho_i^2) \frac{\partial\psi_i}{\partial\tau}    \Big] \right\}
       \\
      &  =  (I)+(II)+(III).
\end{aligned}
\end{equation} 
By the Poincar\'{e} inequality
\[\int_{S_R}|\psi_i-\psi_i^R|^2\le R^2\int_{S_R} \Big|\frac{\partial\psi_i}{\partial\tau}\Big|^2,
\]
we obtain
\begin{align*}
\int_{S_R}   \Big|\frac{\partial\psi_i}{\partial\nu}\Big| \cdot |\psi_i-\psi_i^R|  &\le  \Big(\int_{S_R}   \Big|\frac{\partial\psi_i}{\partial\nu}\Big|^2 \Big)^{\frac12} \Big( R^2\int_{S_R} \Big|\frac{\partial\psi_i}{\partial\tau}\Big|^2 \Big)^{\frac12} \\
& \le \frac{R}{2} \int_{S_R} \Big( \Big|\frac{\partial\psi_i}{\partial\nu }\Big|^2 + \Big|\frac{\partial\psi_i}{\partial\tau}\Big|^2 \Big) = \frac{R}{2} \int_{S_R} |\nabla \psi_i|^2.
\end{align*}
Hence, by the choice of $\mu$
\begin{align*}
(II)& \le \mu \sum_{i=1}^l \int_{S_R} (\al_i^2-\delta)  \Big|\frac{\partial\psi_i}{\partial\nu}\Big| |\psi_i-\psi_i^R|  \\
&\le \frac12  \mu R  \sum_{i=1}^l  \int_{S_R}   (\al_i^2-\delta) |\nabla \psi_i|^2   =  \frac12  \mu R  \eta_0'(R).
\end{align*}
Furthermore, since $I_{\al_i}(u_i)<\infty$,   it comes from Young's inequality that
\begin{align*}
(III)&\le  \sum_{i=1}^l \frac{d_i}{R_0}  I_{\al_i}(u_i)^{\frac12}  \Big(\int_{A_R} |\nabla \psi_i|^2 \Big)^{\frac12}   \le C_0+ \delta  \eta_0(R). 
\end{align*}
As a consequence, we deduce from \eqref{eq:f(R) ineq} that
\begin{equation}
\label{eq;eta0 ineq}
 \eta_0(R) \le \frac{\mu R}{2(1-\delta)}\eta_0'(R)+  \frac{C_0}{1-\delta} \equiv \frac{R}{\nu} \eta_0'(R)+C_1,
\end{equation}
where $C_0$ and $C_1$ are independent of $R$.
We claim that $\eta_0(R)\le C_1$ for all $R>R_0$.
Otherwise, there exists $R_1>R_0$ such that $\eta_0(R_1)>C_1$.
Set $\eta_1(R)=\eta_0(R)-C_1$.
Then, we get $[R^{-\nu}\eta_1(R)]'>0$, which implies that
\[ \eta_1(R) \ge \Big(\frac{R}{R_1} \Big)^{\nu} \eta_1(R_1) \qfor R>R_1.
\]
However, we see from \eqref{eq:int}  that $\eta_1(R)\le C R$ for some $C>0$ and for all large  $R>R_1$.
Since $\nu = 2(1-\delta) /\mu>1$ by the choice of $\delta$ and $\mu$, this yields a contradiction.\\

{\bf Proof of  \eqref{eq:nabla rho bdd}:}
 Multiplying \eqref{eq:rho_j}    by $(\al_i-\rho_i)\xi_R$ and integrating it on $\rtwo\setminus   B_{R_0}$, we have
\[\begin{aligned}
& \la_i \int_{\rtwo\setminus   B_{R_0} } |\nabla\rho_i|^2\xi_R\\
\le&~ \la_i\int_{S_{R_0}}\Big|\frac{\partial\rho_i}{\partial\nu}\Big|\cdot |\al_i-\rho_i| +\frac{ \la_i }{2}\Big| \int_{\rtwo\setminus   B_{R_0} }\nabla\xi_R\cdot\nabla(\al_i-\rho_i)^2\Big|\\
&+ \la_i\int_{\rtwo\setminus   B_{R_0} }|\rho_i(\al_i-\rho_i)|\cdot|\nabla\vp_i|^2+ \int_{\rtwo\setminus   B_{R_0} }\Big| \rho_i (\al_i-\rho_i)\Big(n-\sum_{j=1}^n\rho_j^2\Big)\Big|\xi_R\\
\le &~C \Big\{ 1+\frac{1}{R^2}  I_{\al_i} (u_i) +     \| \nabla \vp_i\|_{L^2({\rtwo\setminus   B_{R_0} }) }+     I_{\al_i}(u_i)^{1\over2} E^{1\over2 } \Big\} <\infty,
\end{aligned}
\]
where the last inequality comes from $E<\infty$, $I_{\al_i}(u_i)<\infty$, and \eqref{eq:nabla psi bdd}.\\

{\bf Proof of  \eqref{eq:remain}:} \\
Let $i \in \{ l+1,\cdots,n\}$ be fixed.
We remind that 
\begin{equation}\label{eq:I-alpha=0}
I_{\al_i} = \int_\rtwo |u_i|^2 = \int_0^\infty \int_{\partial B_r }  |u_i|^2dS dr <\infty.
\end{equation}
So, there exists $r_n \to\infty$ such that 
\[\int_{\partial B_{r_n} }  |u_i|^2dS <\frac{1}{r_n}.
\]
We multiply \eqref{eq:SGL-rtwo} by $\xi_{r_n}^2u_i$ and integrate on $B_{r_n}$, and then we have
\begin{align*}
& \lambda_i\int_{B_{r_n}} \xi_{r_n}^2|\nabla u_i|^2+\lambda_i\int_{B_{r_n}} 2\xi_{r_n} u_i \nabla u_i \cdot \nabla\xi_{r_n}\\
=~ &\lambda_i\int_{\partial B_{r_n}}\frac{\partial u_i}{\partial\nu}\xi_{r_n}^2u_i+\int_{B_{r_n}}\xi_{r_n}^2u_i^2(n-f).
\end{align*}
The right side of the above is uniformly bounded by  \eqref{eq:gradient L-infty} and  \eqref{eq:I-alpha=0}.
Hence, we are led to 
\[\begin{aligned}\lambda_i\int_{\rtwo}\xi_{r_n}^2 |\nabla u_i|^2&\le C\int_{\{{r_n}<|x|<2{r_n}\}}|\xi_{r_n}\nabla u_i||\nabla \xi_{r_n}|\\
&\le C\Big(\int_{\{{r_n}<|x|<2{r_n}\}}\xi_{r_n}^2|\nabla u_i|^2\Big)^{\frac{1}{2}}\Big(\int_{\{{r_n}<|x|<2{r_n}\}}|\nabla \xi_{r_n}|^2\Big)^{\frac{1}{2}}\\
&\le C\Big(\int_{\rtwo}\xi_{r_n}^2|\nabla u_i|^2\Big)^{\frac{1}{2}}.
\end{aligned}\]
Letting $r_n\to \infty$, we obtain the  desired result \eqref{eq:remain}. 
\end{proof}



 \subsubsection*{Acknowledgements.}
 Jongmin Han was supported by Basic Science Research Program through
the National Research Foundation of Korea(NRF) funded by the Ministry of Education (2018R1D1A1B07042681).
Juhee Sohn was supported by the National Research Foundation of Korea(NRF) grant funded by the Korea government(MSIT) (2021R1G1A1003396).

\small
 \bibliographystyle{amsplain}

\end{document}